\newtheorem{letterthm}{Theorem}
\newtheorem{theorem}{Theorem}[section]
\newtheorem{proposition}[theorem]{Proposition}
\newtheorem{lemma}[theorem]{Lemma}
\newtheorem{corollary}[theorem] {Corollary}
\def\virt{{\rm{virt}}}
\def\ker{{\rm{ker}}\ }
\def\IR{{\mathbb R}} 
\def\R{{\mathbb R}} 
\def\IS{{\mathbb S}} 
\def\S{{\mathbb S}} 
\def\IZ{{\mathbb Z}} 
\def\Z{{\mathbb Z}} 
\def\D{{\mathbb D}} 
\def\IQ{{\mathbb Q}}
\def\G{\Gamma}
\def\cd{\rm cd}
\newtheorem{remark}[theorem]{Remark}
\title[Cofinitely Hopfian groups]{Cofinitely Hopfian groups,
open mappings and knot complements} 
\author{M.R.~Bridson \and D.~Groves \and J.A.~Hillman \and G.J.~Martin}
\thanks{This paper is based upon work supported by the 
National Science Foundation (USA) and the Marsden Fund (NZ). Bridson
was supported by a Senior Fellowship from the EPSRC (UK) and a Royal
Society Wolfson Research Merit Award.
Hillman was the Grey College Mathematics Fellow for
Michaelmas Term of 2008 at Durham University.}
\keywords{Cofinitely Hopfian, open mappings, relatively hyperbolic, free-by-cyclic, knot groups.}
\subjclass{20F65, 57M25 (20F67, 20E26)}
\begin{document} 
 
 
\begin{abstract} A group $\Gamma$ is defined to be {\em{cofinitely Hopfian}} 
if every homomorphism $\Gamma\to\Gamma$ whose image is of finite index 
is an automorphism. 
Geometrically significant groups 
enjoying this property include certain relatively hyperbolic groups and
many lattices.
A knot group is cofinitely Hopfian if and only if the knot is not a torus knot.
A free-by-cyclic group is cofinitely Hopfian if and only if 
it has trivial centre. 
Applications to the theory of  open 
mappings between manifolds are presented.
\end{abstract}

\maketitle 

\section{Introduction}

A group $\G$ is said to be {\it Hopfian} (or to have the Hopf property)
if every surjective endomorphism of $\G$ is an automorphism.
It is said to be (finitely) {\it co-Hopfian\/} if every injective endomorphism 
(with image of finite index) is an automorphism. These properties
came to prominence in
Hopf's work on self-maps of surfaces
\cite{Ho}. Related notions have since played a significant role in many  contexts,  
for instance combinatorial group theory,  e.g. \cite{BS,Sela1},  
and the study of  approximate fibrations \cite{Da}. 
Here we will be concerned with a Hopf-type property  that played a
central
role in the work of Bridson, Hinkkanen and Martin
on open and quasi-regular mappings of
compact negatively curved manifolds  \cite{BHM}.

We say that a group $\Gamma$ 
is {\em cofinitely Hopfian} (or has
the cofinite Hopf property)
if every homomorphism $\Gamma \to \Gamma$
whose  image is  of finite index   
is an automorphism.
This condition is stronger than both the Hopf property and the finite co-Hopf property. 
In Section \ref{s:defs} we discuss its relationship to other Hopf-type properties.
In Section \ref{s:top} we recall its relationship to open mappings
of manifolds.

If a torsion-free group is hyperbolic in the sense of Gromov, 
then it is cofinitely Hopfian: this can be deduced from
Zlil Sela's profound study of the endomorphisms of such groups  \cite{Sela1,Sela2}; see
\cite{BHM}.
In the present article, we shall extend this result to cover several other
classes of groups associated with non-positive curvature in group theory. 
Classical rigidity results imply that lattices 
in semisimple Lie groups with trivial centre and no compact
factors are cofinitely Hopfian (Section \ref{s:lattices}).  In the other classes that we consider
the issue is more subtle. These classes are the
toral relatively hyperbolic groups,
free-by-cyclic groups, and (classical) knot groups. 
The following are special cases of the
results that we prove in each case. 
 
\begin{letterthm}\label{t:trh} If a  toral relatively hyperbolic group is   co-Hopfian
then it is cofinitely Hopfian.
\end{letterthm}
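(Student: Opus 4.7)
The plan is to show that any homomorphism $\phi : \Gamma \to \Gamma$ whose image has finite index must be injective; combined with the co-Hopfian hypothesis this will promote $\phi$ to an automorphism.

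I will analyse the iterates $\phi^n$ and their kernels $K_n := \ker \phi^n$. These form an ascending chain of normal subgroups of $\Gamma$, and each image $\phi^n(\Gamma) \cong \Gamma/K_n$ is a finite-index subgroup of $\Gamma$, of index dividing $k^n$, where $k = [\Gamma : \phi(\Gamma)]$. The first step is to show that the chain $(K_n)$ stabilises. I will derive this from the equational Noetherianity of toral relatively hyperbolic groups, as established by Groves in the spirit of Sela's work on hyperbolic groups. Fixing $N$ such that $K_n = K_N$ for all $n \geq N$ and setting $H := \phi^N(\Gamma)$, the subgroup $H$ has finite index in $\Gamma$ and is itself toral relatively hyperbolic; stabilisation of the chain translates into the statement that $\phi$ restricts to an injective endomorphism of $H$, with image $\phi^{N+1}(\Gamma)$ of finite index in $H$.

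The main step is to argue that $\phi|_H$ is an automorphism of $H$. For this I will use that co-Hopfianity of $\Gamma$, together with torsion-freeness, forces $\Gamma$ to be freely indecomposable, a property inherited by finite-index subgroups; hence $H$ is freely indecomposable. A Sela-type rigidity theorem for one-ended toral relatively hyperbolic groups (Sela in the hyperbolic case, and Groves in the relatively hyperbolic setting) then yields that $H$ is itself co-Hopfian, and the injective endomorphism $\phi|_H$ is an isomorphism.

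Finally, I will translate this back to $\Gamma$. Injectivity of $\phi|_H$ gives $\ker \phi \cap H = \{1\}$, and since $H$ has finite index in $\Gamma$ the subgroup $\ker \phi$ injects as a set into the finite coset space $\Gamma/H$; hence $\ker \phi$ is finite, and torsion-freeness of $\Gamma$ forces $\ker \phi = \{1\}$. Thus $\phi$ is injective, hence by co-Hopfianity an automorphism. The principal obstacle I anticipate is the middle step: showing that the finite-index subgroup $H$ inherits co-Hopfianity from $\Gamma$. This rests on Sela-style rigidity for one-ended toral relatively hyperbolic groups, proved via JSJ theory and the shortening argument in this setting. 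Stabilisation of $(K_n)$ in the first step should by contrast be a routine consequence of the equational Noetherianity available for these groups.
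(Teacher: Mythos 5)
Your overall skeleton---show that any endomorphism $\varphi$ with finite-index image is injective, then invoke the co-Hopfian hypothesis---is exactly the paper's (Theorem \ref{Injects} followed by Corollary \ref{c:coH}), and your first and third steps by themselves already constitute essentially the paper's proof. The stabilisation of the chain $K_n=\ker(\varphi^n)$ is obtained in the paper from Theorem 5.2 of \cite{MR-RH} (no infinite properly descending sequence of epimorphisms of $\Gamma$-limit groups, applied to $\Gamma\twoheadrightarrow\varphi(\Gamma)\twoheadrightarrow\varphi^2(\Gamma)\twoheadrightarrow\cdots$, each $\varphi^i(\Gamma)$ being a finitely generated subgroup of $\Gamma$ and hence a $\Gamma$-limit group); this is the same Noetherianity input you cite. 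Your closing observation---that $\ker\varphi\cap H=\{1\}$ for a finite-index subgroup $H$ forces $\ker\varphi$ to be finite, hence trivial by torsion-freeness---is precisely the torsion-freeness/finite-index trick the paper runs, phrased there as a contradiction: if $\ker\varphi\neq 1$ then $\ker\varphi\cap\varphi^i(\Gamma)\neq 1$ for every $i$, so the kernel chain would be strictly increasing.

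Your middle step, however, is both unnecessary and unsound, and you should delete it. It is unnecessary because your final step uses only the injectivity of $\varphi|_H$, which you had already extracted from the stabilisation of $(K_n)$; nothing in the endgame requires $\varphi|_H$ to be surjective onto $H$. It is unsound because the rigidity statement it rests on is false in this setting: one-endedness (equivalently, free indecomposability, for non-cyclic torsion-free groups) does \emph{not} imply co-Hopfianity for toral relatively hyperbolic groups. The group $\mathbb{Z}^2$ is toral relatively hyperbolic (it is a limit group) and one-ended, yet visibly not co-Hopfian, and similar failures occur for groups with suitable abelian splittings. Indeed, the entire reason Theorem \ref{t:trh} carries co-Hopfianity as a \emph{hypothesis} is that, unlike in Sela's torsion-free hyperbolic setting, it is not a consequence of free indecomposability here. (Relatedly, co-Hopfianity is not known to pass to finite-index subgroups, so the detour through $H$ could not be repaired by an ``inheritance'' argument either.) With the middle step excised, your argument is correct and coincides with the paper's.
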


\begin{letterthm}\label{t:F.Z} Let $F$ be a finitely generated free group and let $\G=F\rtimes_\theta\Z$.  The
following conditions are equivalent:
\begin{enumerate}
\item $\G$ is cofinitely Hopfian;
\item $\G$ has trivial centre;
\item $\theta$ has infinite order in ${\rm{Out}}(F)$.
\end{enumerate}
\end{letterthm}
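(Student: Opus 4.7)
The plan is to prove the equivalences in the order $(2)\iff(3)$, then $\lnot(3)\Rightarrow\lnot(1)$, and finally the main implication $(3)\Rightarrow(1)$; throughout, write $\Gamma=F\rtimes_\theta\Z$ with $t$ generating the $\Z$-factor. For $(2)\iff(3)$, assume first that $F$ has rank at least $2$ (the rank $0$ and $1$ cases, where $\Gamma$ is abelian or the Klein-bottle group, are handled by inspection). Then $F$ has trivial centre, so any non-trivial central element of $\Gamma$ has the form $ft^k$ with $k\ne 0$; the requirement that $ft^k$ commute with every $g\in F$ translates into $\theta^k={\rm ad}(f^{-1})$, showing that $\theta$ has finite order in ${\rm Out}(F)$. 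Conversely, if $\theta^n={\rm ad}(w)$, then comparing $\theta\circ\theta^n$ with $\theta^n\circ\theta$ forces $w^{-1}\theta(w)\in Z(F)=\{1\}$, so $\theta(w)=w$, and a direct check shows that $w^{-1}t^n$ is central in $\Gamma$.

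For $\lnot(3)\Rightarrow\lnot(1)$, use the central element $z=w^{-1}t^n$ (with $n\geq 1$) to define $\phi\colon\Gamma\to\Gamma$ by $\phi|_F={\rm id}_F$ and $\phi(t)=tz$. Centrality of $z$ ensures that $\phi$ respects the semidirect-product relation $tft^{-1}=\theta(f)$, and using $\theta(w)=w$ one computes $\phi(t)^k=w^{-k}t^{k(n+1)}$, from which injectivity of $\phi$ follows at once from the normal form in $F\rtimes_\theta\Z$. Since $\phi(t)$ has $t$-exponent $n+1\geq 2$, the image of $\phi$ has index $n+1$ in $\Gamma$. Thus $\phi$ is an injective non-surjective endomorphism of finite-index image, and $\Gamma$ is not cofinitely Hopfian.

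The substantive direction is $(3)\Rightarrow(1)$, and the strategy is to reduce to Theorem~A. Assuming that $\theta$ has infinite order in ${\rm Out}(F)$, one first places $\Gamma$ in the toral relatively hyperbolic class: $\Gamma$ is word-hyperbolic by Brinkmann's theorem when $\theta$ is atoroidal, and otherwise one appeals to relative-hyperbolicity results of Gautero--Lustig and Dahmani, with peripheral subgroups the maximal polynomially-growing subgroups (which are free-abelian by train-track analysis). Theorem~A then converts the problem to showing that $\Gamma$ is co-Hopfian, and for this one uses the triviality of the centre --- which holds by $(2)\iff(3)$ --- together with an intrinsic characterisation of the fibre subgroup $F\le\Gamma$ (for instance via the Bieri--Neumann--Strebel invariant, or as the union of the finitely generated normal subgroups with infinite cyclic quotient) to force any injective endomorphism $\phi$ to map $F$ into a finite-index subgroup of $F$. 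Surjectivity then follows by analysing the induced maps on $F$ and on the quotient $\Z$ and invoking Hopfianity of $\Gamma$ (itself a consequence of residual finiteness of $F\rtimes\Z$).

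The main obstacle is this last step, establishing co-Hopfianity. The delicate point is producing a description of $F\le\Gamma$ that is preserved, at least up to finite index, by every injective self-homomorphism of $\Gamma$; \emph{a priori} $\phi$ need not respect the semidirect-product decomposition. The infinite-order hypothesis on $\theta$ is decisively used here, because whenever it fails the construction of the second paragraph exhibits an injective endomorphism of finite-index image, and it is precisely this type of obstruction that must be ruled out to close the argument.
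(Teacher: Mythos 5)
Your handling of $(2)\Leftrightarrow(3)$ and of $\lnot(3)\Rightarrow\lnot(1)$ is correct, and the latter matches the paper in substance (the paper exhibits $F_r\rtimes_{\theta^s}\Z$ as a proper normal subgroup of finite index rather than writing the endomorphism explicitly). The implication $(3)\Rightarrow(1)$, however, breaks down at exactly the point you flag as ``the main obstacle'', and in two distinct ways. First, the reduction to Theorem A fails at the outset: a group $F\rtimes_\theta\Z$ with $\theta$ of infinite order in ${\rm Out}(F)$ need not be toral relatively hyperbolic. Take $\theta: a\mapsto a,\ b\mapsto ba$ on $F_2$, which has infinite order in ${\rm Out}(F_2)$. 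In $\Gamma=F_2\rtimes_\theta\Z$ one checks that $batb^{-1}=t$, so the subgroup $A=\langle a,t\rangle\cong\Z^2$ meets its conjugate $bAb^{-1}$ in the infinite subgroup $\langle t\rangle$ while $a$ and $bab^{-1}$ do not commute; this is incompatible with every $\Z^2$ lying in an almost malnormal free abelian peripheral. More generally, the relative hyperbolicity theorems you invoke make $F\rtimes_\theta\Z$ hyperbolic relative to the \emph{mapping tori} of the maximal polynomially growing subgroups, and those peripherals are themselves non-abelian free-by-cyclic groups whenever the polynomial part is non-cyclic --- they are not free abelian. Second, even where the reduction is available, your plan is to prove that $\Gamma$ is co-Hopfian, and that is false in general: the paper's Remark 6.6 points out that the group of a composite of fibred knots has free commutator subgroup and trivial centre yet is not co-Hopfian. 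What is needed is only \emph{finite} co-Hopfianity (injectivity being automatic from residual finiteness via Hirshon's Lemma), and it is precisely the surjectivity step for a finite-index-image injection that your sketch leaves as a placeholder.

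The paper closes this gap with a quantitative input for which you offer no substitute. Given an injective endomorphism $\varphi$ with image $N$ of index $d\ge 2$, a minimality-of-rank and Euler characteristic argument shows that $M=\varphi(F_r)$ is the kernel of an epimorphism $\Gamma\to\Z$, so $N=M\rtimes_{\psi^d}\Z$ is normal with quotient $\Z/d\Z$ and $\varphi$ identifies the class of $\theta$ with a conjugate of $\psi^d$ in ${\rm Out}(F_r)$. Iterating $\varphi$ shows that $\theta$ admits a $d^n$-th root in ${\rm Out}(F_r)$ for every $n$. Alibegovi\'c's theorem that infinite-order elements of ${\rm Out}(F_r)$ have translation length at least some $\epsilon_r>0$ then gives $\tau(\theta)\ge d^n\epsilon_r$ for all $n$, which is absurd. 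Without this rigidity statement about roots in ${\rm Out}(F_r)$ (or an equivalent), the hard direction does not close.
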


\begin{letterthm}\label{t:knots} Let $K$ be a knot in $\S^3$. The
fundamental group of $\S^3\smallsetminus K$ is cofinitely Hopfian
if and only if $K$ is not a torus knot.
\end{letterthm}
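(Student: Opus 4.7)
My plan is to treat the two directions separately, splitting the non-torus direction by Thurston's geometrization into hyperbolic and satellite subcases.

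\emph{Torus knots are not cofinitely Hopfian.} If $K=T(p,q)$ with $p,q\ge 2$ coprime, then $G=\pi_1(\S^3\smallsetminus K)=\langle a,b\mid a^p=b^q\rangle$ has infinite cyclic centre $Z(G)=\langle z\rangle$ with $z=a^p=b^q$. For each integer $s$ the assignment $a\mapsto az^{qs}$, $b\mapsto bz^{ps}$ respects the defining relation, since both $(az^{qs})^p$ and $(bz^{ps})^q$ equal $z^{1+pqs}$, and therefore extends to an endomorphism $\phi_s$ of $G$. Because $\phi_s$ is the identity modulo $Z(G)$ and multiplication by $1+pqs$ on $Z(G)\cong\IZ$, it is injective and its image has index $|1+pqs|$ in $G$. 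Taking $s=1$ produces an injective, non-surjective endomorphism of finite-index image, so $G$ is not cofinitely Hopfian. The unknot (which arises as $T(1,n)$) gives $G=\IZ$, which is obviously not cofinitely Hopfian.

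\emph{Hyperbolic knots.} Here $G$ is torsion-free and toral relatively hyperbolic, with the peripheral $\IZ^2$ as its sole parabolic subgroup. Mostow--Prasad rigidity together with multiplicativity of hyperbolic volume under finite covers shows that any injective endomorphism of $G$ with finite-index image is realised by an isometric self-cover of $X_K=\S^3\smallsetminus K$, necessarily of degree one; hence $G$ is co-Hopfian and Theorem~\ref{t:trh} promotes this to cofinite Hopfness.

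\emph{Satellite knots} are the main obstacle, and I would attack this case via the JSJ decomposition of $X_K$. Given $\phi\colon G\to G$ with finite-index image $H$, the inclusion $H\hookrightarrow G$ corresponds to a finite cover $\widetilde{X}\to X_K$ and the goal is to show that this cover is trivial, after which the Hopf property (valid for knot groups, which are residually finite by Hempel) upgrades $\phi$ to an automorphism. Two inputs are crucial: the peripheral $\IZ^2$ is characteristic in $G$, as it is the unique conjugacy class of peripheral subgroup for a knot in $\S^3$; and the JSJ decomposition is natural under finite covers, with each piece Seifert-fibred or hyperbolic relative to its boundary tori. Gromov simplicial volume, which multiplies by the degree and equals a positive sum of volumes of the hyperbolic JSJ pieces, immediately disposes of cases in which some JSJ piece of $X_K$ is hyperbolic. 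The residual case is that of iterated cables of torus knots, a graph-manifold complement whose simplicial volume vanishes; there I would argue using the Bass--Serre combinatorics of the JSJ tree, the explicit presentations of the Seifert-fibred pieces, and the knot-theoretic constraint that the longitude is null-homologous in $\S^3$ to rule out non-trivial finite-index subgroups of $G$ isomorphic to $G$. This graph-manifold subcase is the point at which purely group-theoretic methods seem insufficient and the three-manifold structure must be exploited directly.
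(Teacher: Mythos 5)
Your torus-knot direction is correct and is essentially the paper's own argument: the endomorphisms $a\mapsto az^{qs}$, $b\mapsto bz^{ps}$ are exactly those constructed in Subsection \ref{sst}. The converse direction, however, contains a genuine gap, and you have located it yourself: the graph-manifold case (iterated cables of torus knots, where the simplicial volume of the complement vanishes) is only a plan, not a proof. This is not a routine verification --- it is precisely the content of the Wang--Yu theorem that covering degrees are determined by the graph manifolds involved, and of Gonz\'alez-Acu\~na--Whitten's results on finite co-Hopficity of non-torus knot groups. The paper's route is much shorter: it simply cites these results to conclude that $\pi_K$ is finitely co-Hopfian whenever $K$ is not a torus knot, and then applies Hirshon's Lemma \ref{lem2} (finitely generated, residually finite, torsion-free, hence no nontrivial finite normal subgroup) to see that every endomorphism with finite-index image is injective. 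Until your graph-manifold case is closed, knots such as cables of trefoils are simply not covered by your argument.

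There is also a structural problem in your satellite case as set up. From $\varphi\colon G\to G$ with finite-index image $H$ you pass to the cover $\widetilde X\to X_K$ with $\pi_1\widetilde X=H$ and want to force the degree $d$ to be $1$ by comparing simplicial volumes. But $H=\varphi(G)$ is a priori only a quotient of $G$, not a copy of it; multiplicativity gives $\Vert\widetilde X\Vert=d\,\Vert X_K\Vert$, and to conclude $d=1$ from positivity you need the other relation $\Vert\widetilde X\Vert=\Vert X_K\Vert$, which requires $H\cong G$, i.e.\ injectivity of $\varphi$. (Your phrase ``finite-index subgroups of $G$ isomorphic to $G$'' shows you are tacitly assuming this.) That injectivity is exactly what Lemma \ref{lem2} supplies and must be stated as a first step. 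A smaller instance of the same issue occurs in your hyperbolic case: Mostow--Prasad plus volume gives you finite co-Hopficity, not full co-Hopficity, so you should invoke Theorem \ref{Injects} (or Lemma \ref{lem2}) directly rather than Theorem \ref{t:trh} as stated; the substance is fine there. The essential missing mathematics remains the graph-manifold subcase.
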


If one can prove that the groups in a certain class are cofinitely Hopfian, one can immediately
deduce constraints on the nature  of open mappings of manifolds whose
fundamental groups lies in that class (see Section \ref{s:top}). 
To exemplify this, in Section \ref{s:knots}
we use Theorem \ref{t:knots} to prove:

\begin{letterthm} Let $K\subset\S^3$ be a knot. If $K$  is not a torus knot,
then every proper open self-mapping of $\S^3\smallsetminus K$ is homotopic 
to a homeomorphism.
\end{letterthm}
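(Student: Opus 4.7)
The plan is to combine Theorem \ref{t:knots} with Waldhausen's rigidity theorem for Haken 3-manifolds. Write $M=\S^3\smallsetminus K$ and let $f\colon M\to M$ be a proper open self-mapping. By the general discussion of Section \ref{s:top}, every such $f$ is a branched covering of finite degree $d\ge 1$, and the induced map $f_*\colon\pi_1(M)\to\pi_1(M)$ has image of index $d$. Since $K$ is not a torus knot, Theorem \ref{t:knots} asserts that $\pi_1(M)$ is cofinitely Hopfian, so $f_*$ is an automorphism and $d=1$.

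Since $M$ is aspherical (by Papakyriakopoulos's sphere theorem), any self-map of $M$ inducing an isomorphism on $\pi_1$ is automatically a homotopy equivalence; hence $f$ is one. To upgrade this to a homeomorphism up to homotopy, I pass to the compact knot exterior $X_K=\S^3\smallsetminus\mathrm{int}\,\nu(K)$, a Haken 3-manifold with a single boundary torus $T$. A classical fact about knot groups (following from the JSJ decomposition of $X_K$ together with Mostow rigidity on the hyperbolic pieces and the Seifert structure of the remaining pieces) is that the peripheral subgroup $\pi_1(T)\subset\pi_1(M)$ is preserved up to conjugacy by every automorphism of $\pi_1(M)$. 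Combined with the properness of $f$ and the fact that $M$ has a single toroidal end, this lets me properly homotope $f$ so that it restricts to a map of pairs $\bar f\colon(X_K,T)\to(X_K,T)$ whose restriction to $T$ is a degree-one self-map of the torus, hence a homotopy equivalence of $T$.

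Because $X_K$ is irreducible with incompressible boundary (the latter holds for any nontrivial knot), and $\bar f$ is a homotopy equivalence of pairs with a boundary restriction that is already a homotopy equivalence, Waldhausen's theorem on sufficiently large 3-manifolds then produces a homeomorphism $(X_K,T)\to(X_K,T)$ homotopic (as a map of pairs) to $\bar f$; restricting to the interior yields the desired homeomorphism of $M$ homotopic to $f$. The step I expect to be most delicate is the transition from the open manifold $M$ to the compact pair $(X_K,T)$: one must combine the peripheral rigidity of knot groups with the proper finite-degree branched-covering structure of $f$ to ensure that $f$ can be properly homotoped so as to carry a neighbourhood of the end of $M$ into itself, so that the relative form of Waldhausen's theorem is applicable. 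Once that is done, the remaining steps are standard.
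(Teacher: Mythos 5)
Your overall strategy --- Proposition \ref{Walsh} to get finite index, the cofinite Hopf property of Theorem \ref{t:knots} to get an isomorphism on $\pi_1$, asphericity to get a homotopy equivalence, and Waldhausen's rigidity theorem to finish --- is exactly the paper's. The gap is in the middle step, where one must show that $f$ respects the peripheral structure before Waldhausen's theorem applies. You reduce this to the assertion that the peripheral subgroup $P=\pi_1(\partial X_K)$ is preserved up to conjugacy by \emph{every} automorphism of $\pi_K$, justified only by a one-line appeal to JSJ theory. That assertion is genuinely nontrivial: it is immediate for hyperbolic knots, where $P$ is the unique conjugacy class of maximal $\Z^2$ subgroups, but for satellite and composite knots it requires knowing that automorphisms preserve the JSJ splitting and then analysing which $\Z^2$ subgroups of the outermost piece can be edge groups --- none of which you carry out. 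Moreover you yourself flag, and do not close, the second half of the step: upgrading the conjugacy statement to a proper homotopy of $f$ to a map of pairs $(X_K,\partial X_K)\to(X_K,\partial X_K)$.

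The paper sidesteps both difficulties by using the properness of $f$ directly: the punctured tubular neighbourhoods $rN\smallsetminus K$ form a cofinal system of neighbourhoods of the single end of $\S^3\smallsetminus K$, so a proper self-map can be homotoped to carry $\partial X_K$ into $\partial X_K$, giving $f_*(P)\subseteq P$ with no appeal to JSJ theory or peripheral rigidity. It then shows $f_*(P)=P$ by a short self-contained argument: if $f_*(P)$ were proper in $P$, injectivity of $f_*$ would force an element of $\pi_K\smallsetminus P$ to centralise $P$, so the double $\pi_K *_P \pi_K$ would contain $\Z^3$; but the double of $X_K$ along its boundary is a closed aspherical $3$-manifold, so its fundamental group would be virtually abelian, contradicting the existence of non-abelian free subgroups of $\pi_K$. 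You should also correct one side remark: a proper open surjection between manifolds need not be a branched covering, nor even discrete (this is precisely why the paper cites Wilson's counterexamples); all that Proposition \ref{Walsh} provides, and all you need, is that $f_*\pi_1$ has finite index.
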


\section{Topological Motivation}\label{s:top}

The work of John Walsh \cite[Theorem 4.1]{Walsh}  and Steven Smale \cite{SS}
establishes a strong connection between the cofinite Hopf property of groups and open mappings of
quite general spaces. For simplicity we state their results only for manifolds, which is sufficient
for our purposes. Recall that a map is {\em proper} if the preimage of each compact set is compact, and
a {\em light} mapping is one for which the preimage of every point is a totally disconnected space,
for instance  a Cantor set.

\begin{proposition}\label{Walsh} Let $M_1$ and $M_2$ be connected manifolds (possibly
with boundary). If a map $f:M_1\to M_2$ is  proper, open and
surjective, then the index of $f_*\pi_1M_1$ in $\pi_1M_2$ is finite.
Moreover, $f$ induces a surjection on rational homology. 
\end{proposition}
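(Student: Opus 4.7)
The plan is to treat the two assertions separately: the index claim by a covering-space argument, and the rational homology claim by degree theory. For the first, let $H = f_*\pi_1(M_1) \leq \pi_1(M_2)$ and let $p\colon \widetilde{M}_2 \to M_2$ be the connected cover corresponding to $H$. By the lifting criterion we obtain a factorisation $f = p \circ \tilde f$ with $\tilde f\colon M_1 \to \widetilde{M}_2$ inducing a surjection $\pi_1(M_1) \twoheadrightarrow \pi_1(\widetilde{M}_2) = H$. I would first verify that $\tilde f$ inherits the hypotheses placed on $f$: openness follows from $f$ being open and $p$ a local homeomorphism; properness follows because for any compact $K \subset \widetilde{M}_2$ the continuous preimage $\tilde f^{-1}(K)$ sits inside $f^{-1}(p(K))$ as a closed subset of a compact space; and surjectivity follows because $\tilde f(M_1)$ is then both open (by openness) and closed (proper maps into locally compact Hausdorff spaces are closed), hence all of the connected manifold $\widetilde{M}_2$.

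Finite index then falls out of a short compactness argument. Suppose for contradiction that $[\pi_1 M_2 : H] = \infty$; then for any chosen $y \in M_2$ the fibre $p^{-1}(y)$ is a discrete infinite subset of $\widetilde{M}_2$, and we may pick distinct points $\tilde y_1, \tilde y_2, \ldots \in p^{-1}(y)$. Surjectivity of $\tilde f$ furnishes a sequence $x_n \in \tilde f^{-1}(\tilde y_n)$, all lying in the compact set $f^{-1}(y)$. A convergent subsequence $x_{n_k} \to x^*$ yields $\tilde y_{n_k} = \tilde f(x_{n_k}) \to \tilde f(x^*)$, contradicting the fact that the $\tilde y_{n_k}$ are distinct points of the discrete set $p^{-1}(y)$.

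For the rational homology assertion I would appeal to degree theory. When $\dim M_1 = \dim M_2 = n$, invariance of domain forces the fibres of the open map $f$ to be $0$-dimensional; properness then makes them finite, so with an appropriate choice of orientations (or twisted coefficients in the non-orientable case) $f$ has a nonzero integer degree $d$. A transfer construction $\tau\colon H_*(M_2;\IQ) \to H_*(M_1;\IQ)$ satisfying $f_* \circ \tau = d\cdot \mathrm{id}$ then delivers surjectivity of $f_*$ on rational homology. The genuine obstacle is the possibility that $\dim M_1 > \dim M_2$, where the fibres have positive dimension and degree theory does not apply directly; here I would appeal to the monotone-light factorisation theorems of Walsh and Smale to reduce to the equidimensional situation, rather than attempt to reconstruct that machinery from scratch.
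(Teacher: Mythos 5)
The paper offers no proof of this proposition: it is quoted from Walsh \cite{Walsh} (Theorem 4.1) and Smale \cite{SS}, so there is no in-house argument to compare against. Your treatment of the finite-index clause is correct and complete, and is in essence Smale's original argument: the lift $\tilde f$ to the cover $p\colon\widetilde M_2\to M_2$ corresponding to $H=f_*\pi_1(M_1)$ is open, proper, and (having open and closed image in a connected space) surjective; one can even finish more cleanly than with your sequence argument by observing that $\tilde f\bigl(f^{-1}(y)\bigr)=p^{-1}(y)$, so the fibre of the covering is a compact discrete set, hence finite. I would accept that half as written.

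The rational homology half contains a genuine gap. Invariance of domain says nothing about the point-inverses of a non-injective open map, and even granting that the fibres are $0$-dimensional, properness only makes them \emph{compact and totally disconnected} -- e.g.\ Cantor sets -- not finite. This is exactly the light-versus-discrete distinction the paper itself dwells on in Section 7: by \cite{Wilson} there are light open self-maps of $\IS^3$ that are not discrete, i.e.\ proper open surjections between manifolds of the same dimension with infinite point-inverses. So the ``nonzero degree plus transfer over finite fibres'' mechanism never gets started; producing a degree and a transfer for an arbitrary proper open surjection in the equidimensional case is the actual content of Walsh's theorem, and your sketch does not substitute for it.

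The proposed reduction of the unequal-dimension case via monotone--light factorisation also cannot work, because the homology assertion is false there: the Hopf fibration $\IS^3\to\IS^2$ is a proper open surjection of connected manifolds with connected fibres (so it is its own monotone part), yet $H_2(\IS^3;\IQ)=0$ does not surject onto $H_2(\IS^2;\IQ)=\IQ$. Your instinct that unequal dimensions are the obstruction is therefore sound, but the correct conclusion is that this case must be excluded by hypothesis (as in Walsh's actual theorem), not reduced to the equidimensional one; note also that a monotone open map gives no control on $H_k$ for $k\geq 1$ without acyclicity of fibres. None of this affects the paper downstream, since only the finite-index clause is ever invoked, and only for maps between manifolds of equal dimension.
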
 

In the opposite direction we have:

\begin{theorem}\label{Walsh2}
If $M$ and $N$ are compact connected PL manifolds and $f:M\to N$ is
a map with $[\pi_1N:f_*\pi_1M]<\infty$,  
then $f$ is homotopic to a light open mapping.
\end{theorem}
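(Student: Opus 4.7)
The plan is to first use covering space theory to reduce to the case where $f$ is $\pi_{1}$-surjective, and then construct a light open mapping in this reduced case by a direct PL construction over a triangulation of the target.

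For the reduction, let $p: \tilde{N} \to N$ be the finite-sheeted covering corresponding to the subgroup $f_{*}\pi_{1}M \leq \pi_{1}N$. Standard lifting theory produces $\tilde{f}: M \to \tilde{N}$ with $p \circ \tilde{f} = f$ and $\tilde{f}_{*}\pi_{1}M = \pi_{1}\tilde{N}$. Since $p$ is a finite-sheeted PL cover of a compact manifold, it is itself proper, open, and light (point-preimages are finite sets). Hence, if $g \simeq \tilde{f}$ is a light open map $M \to \tilde{N}$, then $p \circ g \simeq f$ is again a light open map, now landing in $N$. It therefore suffices to treat the case in which $f_{*}\pi_{1}M = \pi_{1}N$.

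In this reduced setting, fix a triangulation $\mathcal{T}$ of $N$ and proceed by induction over its skeleta. The starting observation is that every PL $n$-ball admits a PL light open surjection onto itself whose every point-preimage is a Cantor set; such ``model'' maps serve as building blocks on the top simplices of $\mathcal{T}$. Using $\pi_{1}$-surjectivity, I would first homotope $f$ to a PL map whose image contains every open top simplex of $\mathcal{T}$, then alter $f$ over each top simplex $\sigma$ by splicing in a copy of the model map on a regular neighbourhood of $f^{-1}(\sigma)$. Compatibility across faces is arranged inductively: at the $k$th step one modifies the map on the preimage of the $k$-skeleton so that openness and lightness both propagate to the $(k+1)$-skeleton, with the PL structure ensuring that these modifications can be performed by a small PL homotopy.

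The principal obstacle is precisely the simultaneous preservation of lightness and openness during this inductive extension: openness requires point-preimages to be ``fat'' in the sense that every neighbourhood maps onto a neighbourhood of the image, while lightness forbids them from containing any continuum. Reconciling the two demands is the technical heart of Walsh's argument in \cite{Walsh}, and is where the PL hypothesis is essential. An equivalent and sometimes cleaner packaging is in the language of upper semicontinuous cellular decompositions: one builds the required light open surjection as the projection map of a suitable decomposition of $M$ into totally disconnected pieces indexed by points of $N$, and then verifies—using the PL structure on both manifolds—that this projection is PL-homotopic to the original $f$.
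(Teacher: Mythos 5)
The paper offers no proof of this statement: it is quoted (modulo a dropped hypothesis, see below) from Walsh \cite[Theorem 4.1]{Walsh}, so the only in-paper ``proof'' is that citation. Measured against that, your first paragraph is a correct and worthwhile addition: passing to the finite-sheeted cover $p:\tilde N\to N$ corresponding to $f_*\pi_1M$, lifting $f$ to a $\pi_1$-surjective map $\tilde f$, and noting that the composite of a light open map with the light, open covering projection $p$ is again light and open (a finite union of compact totally disconnected point-preimages is totally disconnected) correctly reduces the theorem to the $\pi_1$-surjective case.

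The rest, however, is not a proof but a description of where the proof would have to live, and it has a genuine gap. Splicing a Cantor-set ``model map'' of the $n$-ball into $f$ over each open top simplex of $\mathcal{T}$ says nothing about points of the $(n-1)$-skeleton of $N$: the preimage of such a point can still contain a continuum (lightness fails there) and neighbourhoods of points mapping there need not map onto neighbourhoods of the image (openness fails there). Your inductive step --- ``modify the map on the preimage of the $k$-skeleton so that openness and lightness both propagate'' --- is precisely the assertion to be proved, and you explicitly defer its justification to Walsh. A concrete symptom that something essential is missing: your argument nowhere uses the dimension of the manifolds, yet the statement is false in dimension $2$. The constant map $\S^2\to\S^2$ satisfies $[\pi_1N:f_*\pi_1M]=1$, but by Sto\"ilow's theorem a light open map of closed surfaces is a branched covering, hence has nonzero degree, so no light open map is homotopic to a degree-zero map. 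Walsh's theorem carries the hypothesis $\dim\geq 3$ (harmlessly omitted in the paper, whose applications are to $3$-manifolds), and any correct construction must use it. So either cite Walsh outright for the $\pi_1$-surjective case, keeping your covering-space reduction as a genuine addendum, or supply the decomposition-space construction in dimensions $\geq 3$; the sketch as written does neither.
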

 
In \cite{BHM}  these  results  were used in tandem with the fact that
torsion-free hyperbolic groups are cofinitely Hopfian to study  discrete open mappings
 between manifolds  of negative curvature. Motivation for this study
comes from the Lichnerowicz problem of 
identifying those $n$--manifolds which admit rational endomorphisms 
 \cite{MMP}, and from a desire to extend the scope of Mostow rigidity.

\section{Toral Relatively Hyperbolic Groups}
 
 A finitely generated group $\Gamma$ is {\em toral relatively hyperbolic}
if it is torsion-free and hyperbolic relative to a collection
of free abelian subgroups.  
We refer the
reader to \cite{DG} for a more expansive
definition. For our purposes, the following
observations will be more useful  than the technicalites of the definition. 
 
The following families of groups are  toral relatively hyperbolic:
\begin{itemize}
\item Torsion-free (word) hyperbolic groups;
\item Fundamental groups of finite-volume or geometrically finite hyperbolic manifolds whose cusp
cross-sections are tori;
\item limit groups \cite{Se-limit} (otherwise known as fully residually free groups),
and more generally groups that act freely on $\IR^n$--trees 
(cf.~\cite{ali2, Dah, vincent}).
\end{itemize}
A great deal is known about toral relatively hyperbolic groups. In particular, 
the isomorphism problem is solvable in this class \cite{DG}.
They are all Hopfian \cite{MR-RH}.  Degenerate examples such
as $\mathbb Z$
are not finitely
co-Hopfian, but the following result shows that this is the only obstruction to being
cofinitely Hopfian.

\begin{theorem} \label{Injects}
Let $\Gamma$ be a toral relatively hyperbolic group and let $\varphi : \Gamma
\to \Gamma$ be a homomorphism so that $\varphi(\Gamma)$ has finite index
in $\Gamma$.  Then $\varphi$ is an injection.
\end{theorem}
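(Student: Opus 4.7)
We argue by contradiction. Assume that $\varphi(\Gamma)$ has finite index in $\Gamma$ while $K:=\ker\varphi\neq 1$. Set $\Gamma_n=\varphi^n(\Gamma)$ and $K_n=\ker(\varphi^n)$. A straightforward induction (the image of a finite-index subgroup under a map with finite-index image is again finite-index) shows that every $\Gamma_n$ is of finite index in $\Gamma$, and is therefore itself toral relatively hyperbolic. The kernels form an ascending chain $1\neq K_1\subseteq K_2\subseteq\cdots$, and because $\Gamma$ is torsion-free, this chain is strictly ascending unless the restriction of $\varphi$ to some $\Gamma_n$ is injective --- a case to which one reduces by iteration and passage to a finite-index subgroup. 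Thus we may assume that $(\varphi^n)_{n\geq 1}$ is an infinite sequence of pairwise distinct homomorphisms $\Gamma\to\Gamma$, each with finite-index image.

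The core step is to show that such a sequence cannot exist. Following the template of Sela's rigidity arguments for hyperbolic groups \cite{Sela1,Sela2}, extended to the toral relatively hyperbolic setting in \cite{DG}, one rescales and passes to a subsequence, and uses a Bestvina--Paulin-style construction to produce a non-trivial isometric action of $\Gamma$ on an $\R$-tree $T$ with arc-stabilisers lying in the class of (abelian) parabolic subgroups. Rips' structure theorem then yields a JSJ-style decomposition of $\Gamma$ relative to its parabolic subgroups, and a corresponding collection of ``modular'' automorphisms $\alpha_n\in\operatorname{Aut}(\Gamma)$ with the property that $\varphi^n\circ\alpha_n$ sends a fixed finite generating set to strictly shorter elements of $\Gamma$ than $\varphi^n$ does. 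As there are only finitely many elements of $\Gamma$ below any given word length, this shortening cannot persist, contradicting the existence of the infinite sequence.

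I expect the main obstacle to be the construction and analysis of the $\R$-tree limit together with the verification that the Rips/Sela shortening argument goes through in the toral relatively hyperbolic setting. One must check that the limit action is non-degenerate (in particular, that it cannot be trivialised by conjugation by inner automorphisms of $\Gamma$, which would be the case if the lengths $\max_{s\in S}|\varphi^n(s)|$ stayed bounded), that its arc-stabilisers genuinely lie in the class to which Rips' theorem applies, and that the modular automorphisms really do shorten the relevant quantity. Each of these steps depends on the structural machinery --- JSJ decompositions, Makanin--Razborov-type diagrams, and the equationally Noetherian property --- available for toral relatively hyperbolic groups through \cite{DG}; once these ingredients are in place, the contradiction is clean and $\varphi$ must have been injective to begin with.
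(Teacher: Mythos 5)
Your reduction is exactly the one the paper uses: torsion-freeness plus the finite-index hypothesis force $\ker(\varphi)\cap\varphi^i(\Gamma)\neq 1$ for every $i$, so the kernels $\ker(\varphi^i)$ are strictly increasing and one obtains an infinite, properly descending sequence of epimorphisms $\Gamma\twoheadrightarrow\varphi(\Gamma)\twoheadrightarrow\varphi^2(\Gamma)\twoheadrightarrow\cdots$. One small wrinkle: the case you set aside, where $\varphi$ restricts to an injection on some $\varphi^n(\Gamma)$, needs no separate ``reduction by passage to a finite-index subgroup'' (which in any case would not give injectivity of $\varphi$ on all of $\Gamma$); it simply cannot occur, because $\ker(\varphi)$ is a non-trivial torsion-free group, hence infinite, and therefore meets the finite-index subgroup $\varphi^n(\Gamma)$ non-trivially.

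The genuine gap is in the core step. The paper does not rerun the Bestvina--Paulin/Rips/shortening machinery; it quotes Theorem 5.2 of \cite{MR-RH}, the descending chain condition for $\Gamma$-limit groups, noting only that each $\varphi^i(\Gamma)$, being a finitely generated subgroup of $\Gamma$, is a $\Gamma$-limit group. Your direct shortening argument does not close as written. The shortening argument yields a contradiction only when the homomorphisms in the sequence have been chosen \emph{shortest} in their equivalence classes under precomposition by modular automorphisms and postcomposition by conjugation; the specific powers $\varphi^n$ carry no such minimality, so producing $\alpha_n$ with $\varphi^n\circ\alpha_n$ shorter than $\varphi^n$ contradicts nothing --- the shortened maps are different homomorphisms, not members of your sequence, and ``only finitely many elements below a given length'' gives no descent that terminates. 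One must also rule out a trivial limit action and control arc and tripod stabilisers relative to the parabolics, as you note. All of this is genuinely carried out in \cite{MR-RH} (resting on the equationally Noetherian property and the theory of $\Gamma$-limit groups), but it is the content of the cited theorem, not something recoverable from a two-paragraph sketch. Either reprove the descending chain condition in full or cite it, as the paper does; the latter turns your first paragraph into a complete proof.
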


\begin{proof}
In order to obtain a contradiction we suppose that $[\Gamma : \varphi(\Gamma)] < \infty$
and that $\varphi$ is not an injection.   
Since $\Gamma$ is torsion-free and each $\varphi^i(\Gamma)$ has finite index
in $\Gamma$, the groups $K_i = \ker(\varphi) \cap \varphi^i(\Gamma)$ are all
non-trivial.  Consequently, the kernels $\ker(\varphi) \subset \ker(\varphi^2) \subset \ldots$
are all distinct, so  there is a properly descending sequence of epimorphisms:
\[	\Gamma \twoheadrightarrow \varphi(\Gamma) \twoheadrightarrow
\varphi^2(\Gamma) \twoheadrightarrow \ldots .	\]
But \cite[Theorem 5.2]{MR-RH} tells us that such sequences do not exist. (To see that \cite[Theorem 5.2]{MR-RH} applies, note that each $\phi^i(\Gamma)$, being a finitely generated subgroup of $\Gamma$, is a $\Gamma$-limit group.)
This contradiction implies that $\varphi$ must
in fact be an injection.
\end{proof}

\begin{corollary} \label{c:coH}
Let $\Gamma$ be a co-Hopfian toral relatively hyperbolic group. 
Then $\Gamma$ is cofinitely Hopfian.
\end{corollary}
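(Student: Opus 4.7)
The plan is to recognize that Theorem \ref{Injects} has already done essentially all the work, so the corollary should reduce to a one-line deduction. Given an arbitrary homomorphism $\varphi : \Gamma \to \Gamma$ whose image has finite index in $\Gamma$, my first step is to invoke Theorem \ref{Injects} to conclude that $\varphi$ is automatically injective. Then, since $\Gamma$ is co-Hopfian by hypothesis, every injective endomorphism of $\Gamma$ is an automorphism, so $\varphi$ is an automorphism. As $\varphi$ was an arbitrary endomorphism with finite-index image, $\Gamma$ satisfies the cofinite Hopf property.

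There is no real obstacle at the level of this corollary; the substantive content lies upstream in Theorem \ref{Injects}, which in turn drew on the result from \cite{MR-RH} ruling out properly descending chains of epimorphisms between $\Gamma$-limit groups. Worth flagging is that one does not even need the full force of the co-Hopfian hypothesis: because Theorem \ref{Injects} supplies injectivity together with finite-index image, the proof only uses the (a priori weaker) finitely co-Hopfian property. Thus the corollary can be viewed as a clean packaging of Theorem \ref{Injects}: for toral relatively hyperbolic groups the distinction between the co-Hopfian, finitely co-Hopfian, and cofinitely Hopfian properties collapses as soon as any one of them is available.
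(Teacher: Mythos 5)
Your proposal is correct and is exactly the argument the paper intends (the corollary is stated without proof precisely because it follows immediately from Theorem \ref{Injects} plus the co-Hopfian hypothesis). Your observation that only the finitely co-Hopfian property is actually needed is also accurate, but this does not change the substance of the argument.
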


\section{Lattices} \label{s:lattices}

The discussions leading to this paper began with a desire to extend the results of
\cite{BHM} concerning convex cocompact Kleinian groups to 
non-uniform lattices and geometrically
finite groups. 
Corollary \ref{c:coH} represents progress in this direction since
it applies to  the fundamental groups of finite volume hyperbolic 
manifolds whose cusp cross-sections are tori. However, non-uniform
lattices whose cusp
cross-sections are more general flat manifolds need not be toral relatively
hyperbolic (though they will always have a subgroup of finite index that
is), so are not covered by Corollary \ref{c:coH}.
 To circumvent this difficulty we use the following result of Hirshon \cite{Hi}
instead of Theorem \ref{Injects} (cf.~\cite{BHM} Proposition 4.4).
Hirshon's argument is a variant on the standard argument for the Hopficity of 
residually finite groups.

\begin{lemma}\label{lem2}
Let $\Gamma$ be a finitely generated residually finite group 
with no non-trivial finite normal subgroup.
If $\varphi:\Gamma\to{\Gamma}$ is an endomorphism with image of finite index 
then $\varphi$ is a monomorphism.
\end{lemma}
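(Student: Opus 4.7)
The plan is to show that $K := \ker\varphi$ must be finite, whereupon the hypothesis that $\Gamma$ has no non-trivial finite normal subgroup forces $K = 1$. Writing $m = [\Gamma : \varphi(\Gamma)]$, I will aim for the sharper bound $|K| \le m$. As the main technical device, for each positive integer $n$ let $V_n$ be the intersection of all normal subgroups of $\Gamma$ of index at most $n$. Finite generation of $\Gamma$ ensures that this is an intersection of finitely many subgroups, so $V_n$ has finite index in $\Gamma$ and is characteristic by construction; residual finiteness then guarantees $\bigcap_n V_n = 1$.

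The first real step is to check that $\varphi$ descends to the finite quotient $Q_n := \Gamma/V_n$ with tightly controlled kernel. For any normal $N \le \Gamma$ of index $k$, the map $\Gamma/\varphi^{-1}(N) \hookrightarrow \Gamma/N$ induced by $\varphi$ shows $\varphi^{-1}(N)$ is normal of index dividing $k$. Intersecting over all such $N$ of index at most $n$ yields $\varphi^{-1}(V_n) \supseteq V_n$, equivalently $\varphi(V_n) \subseteq V_n$, so $\varphi$ induces $\bar\varphi_n \colon Q_n \to Q_n$. The image of $\bar\varphi_n$ is $\varphi(\Gamma)V_n/V_n$, so by the first isomorphism theorem $|\ker\bar\varphi_n| = [\Gamma : \varphi(\Gamma)V_n]$, which divides $m$.

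The contradiction is then a counting argument. Assume for contradiction that $|K| > m$ and pick $m+1$ distinct elements $k_0, \ldots, k_m$ of $K$. By residual finiteness applied to the finitely many non-trivial ratios $k_ik_j^{-1}$, there is a finite-index normal subgroup $N$ separating them; take $n = [\Gamma : N]$, so that $V_n \subseteq N$. The images of $k_0, \ldots, k_m$ in $Q_n$ are then pairwise distinct, and they all lie in $\ker\bar\varphi_n$ since each $k_i \in K$. This contradicts $|\ker\bar\varphi_n| \le m$. Hence $|K| \le m$, so $K$ is a finite normal subgroup of $\Gamma$, and the hypothesis forces $K = 1$.

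The conceptual obstacle I expect is reconciling two competing demands on the finite quotient: it must be fine enough to distinguish the chosen elements of $K$ (via residual finiteness), yet it must also be preserved by $\varphi$ so that the induced map $\bar\varphi_n$ is defined and the bound on its kernel can be read off from $[\Gamma : \varphi(\Gamma)]$. Taking $V_n$ characteristic rather than merely normal resolves this tension, and it is here that finite generation of $\Gamma$ enters in an essential way.
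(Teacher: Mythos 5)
Your proof is correct. Note that the paper itself gives no proof of this lemma: it is quoted as a result of Hirshon, with the remark that his argument is ``a variant on the standard argument for the Hopficity of residually finite groups.'' Your argument is precisely such a variant, and it is complete: the subgroups $V_n$ have finite index because a finitely generated group has only finitely many subgroups of each finite index; the preimage argument correctly shows $\varphi(V_n)\subseteq V_n$; the identity $|\ker\bar\varphi_n|=[\Gamma:\varphi(\Gamma)V_n]$, which divides $m$, is right; and the separation of $m+1$ elements of $\ker\varphi$ in some $Q_n$ gives the contradiction. So you obtain the sharp conclusion that $\ker\varphi$ is a finite normal subgroup of order at most $m$, whence trivial. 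One terminological quibble: what you need (and what your preimage argument actually establishes) is that $V_n$ is \emph{fully invariant}, i.e.\ preserved by every endomorphism; ``characteristic'' only refers to invariance under automorphisms and would not by itself let you descend $\varphi$ to $Q_n$. Since you prove the containment $\varphi(V_n)\subseteq V_n$ directly, this does not affect the argument.
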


Although our initial interest lay with lattices in ${\rm{SO}}(n,1)$, 
it does not seem any harder to deal with similar lattices in more general Lie groups. 
We would like to have given an elementary proof\footnote{we thank Nicolas Monod for
his thoughts on this} covering a class of lattices that
includes those in ${\rm{SO}}(n,1)$. However, we are unable to find a 
proof using less machinery than what follows. (Some alternative approaches
are discussed in the next section.)

\begin{proposition}\label{TRHVH}
Let $G$ be a connected semisimple Lie group with trivial centre 
and no compact factors, 
and let $\G$ be a lattice in $G$.
If either
\begin{enumerate}
\item $\G$ is cocompact or 
\item $\G$ is irreducible
\end{enumerate}
 then it is cofinitely Hopfian.
\end{proposition}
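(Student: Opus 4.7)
The plan is to establish separately, for any $\varphi : \G \to \G$ with image of finite index, that $\varphi$ is injective and then that it is surjective. For injectivity I would apply Hirshon's Lemma \ref{lem2}: the group $\G$ is finitely generated and, being linear, residually finite by Malcev's theorem, so the real content is that $\G$ has no non-trivial finite normal subgroup. To see this, suppose $N\triangleleft\G$ is finite; its centralizer $C_\G(N)$ would then have finite index in $\G$, and the Borel density theorem (which applies in both of our cases because $G$ has no compact factors) would give that $C_\G(N)$ is Zariski dense in $G$. This forces $N \subset Z(G) = \{1\}$, and Hirshon's lemma then yields that $\varphi$ is injective.

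For surjectivity, set $\G' = \varphi(\G)$: this is a finite-index subgroup of the lattice $\G$, hence itself a lattice in $G$, and $\varphi : \G \to \G'$ is an abstract isomorphism of lattices in the same ambient group. The next step would be to appeal to the Mostow--Prasad--Margulis rigidity theorem to extend $\varphi$ to an automorphism $\Phi$ of $G$ with $\Phi(\G) = \G'$. In case (2) the theorem applies directly; in case (1) one would first pass to a finite-index subgroup of $\G$ that splits as a product of irreducible cocompact lattices and apply rigidity factor-by-factor. A covolume comparison then closes things off: the automorphism $\Phi$ acts on the Lie algebra $\mathfrak g$ preserving the Killing form, so with determinant of modulus one, and hence $\Phi$ preserves Haar measure on $G$; therefore
\[
{\rm{covol}}(\G) \, = \, {\rm{covol}}(\Phi(\G)) \, = \, {\rm{covol}}(\G') \, = \, [\G:\G']\cdot{\rm{covol}}(\G),
\]
which forces $[\G : \G'] = 1$, so $\varphi$ is onto.

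The main difficulty in this plan is the invocation of rigidity: one must check that the relevant version of Mostow--Prasad--Margulis genuinely covers every lattice admitted by hypotheses (1) and (2), and in particular that a rigidity result for the irreducible factors can be assembled into a global automorphism of $G$ realising $\varphi$. Low-dimensional anomalies such as ${\rm{PSL}}_2(\IR)$, where Mostow rigidity fails, must be handled separately; in those cases the corresponding cocompact lattices are hyperbolic surface groups, which are already cofinitely Hopfian by the torsion-free hyperbolic case cited in the introduction.
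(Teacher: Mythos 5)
Your injectivity argument is exactly the paper's: adjoint representation gives linearity, Mal'cev gives residual finiteness, Borel density kills finite normal subgroups, and Hirshon's Lemma \ref{lem2} does the rest. The divergence, and the trouble, is in the surjectivity half. For case (2) with $G\neq{\rm{PSL}}(2,\R)$ your route (Mostow--Prasad rigidity to extend $\varphi$ to an automorphism $\Phi$ of $G$, then the Haar-measure/covolume count) is sound and is essentially what the paper does there. But your treatment of the exceptional and reducible situations has genuine gaps. First, a cocompact lattice in ${\rm{PSL}}(2,\R)$ need not be a surface group: it can be a triangle group or any cocompact Fuchsian group with torsion, so the ``torsion-free hyperbolic groups are cofinitely Hopfian'' result does not apply to it; and in case (2) the non-uniform irreducible lattices in ${\rm{PSL}}(2,\R)$ are virtually free (e.g.\ ${\rm{PSL}}(2,\Z)$), not surface groups, and your proposal does not address them at all. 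The paper covers both by multiplicativity of the rational Euler characteristic, which is nonzero for Fuchsian lattices.

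Second, and more seriously, for a reducible cocompact lattice the plan ``pass to a finite-index subgroup that splits as a product of irreducible lattices and apply rigidity factor-by-factor'' is not an argument yet. The endomorphism $\varphi$ need not carry that finite-index subgroup into itself, need not respect the product decomposition, and if one of the irreducible factors sits in ${\rm{PSL}}(2,\R)$ there is no rigidity to apply to that factor, so no global automorphism $\Phi$ of $G$ can be assembled; nor does cofinite Hopficity of the individual factors formally imply it for the product, since a homomorphism of a product need not split. (The paper itself flags that products of irreducible lattices require an additional centraliser analysis.) The paper sidesteps all of this in the cocompact case by a different mechanism: Selberg's Lemma gives a torsion-free finite-index $H\le\G$, the pigeonhole principle applied to the subgroups $H_k=\varphi^{-k}(H)$ (all of index at most $[\G:H]$) produces $m,n$ with $\varphi^m(H_n)\le H_n$, and then a multiplicative volume-type invariant (Reznikov's rank volume, or L\"oh--Sauer's Lipschitz simplicial volume) forces $\varphi^m(H_n)=H_n$ and hence $\varphi^m(\G)=\G$. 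That argument needs no irreducibility and no rigidity, which is precisely why the paper can state case (1) for arbitrary cocompact lattices. To salvage your version you would either have to carry out the factor-permutation and centraliser analysis in detail (including the Fuchsian factors) or replace the rigidity step in case (1) by such a multiplicative invariant.
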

\begin{proof} 
Since $G$ is connected and centreless, the adjoint representation 
is faithful and $G$ is linear.  
Also, $\G$ is finitely generated (see, for example, 
\cite[IX.3.1.(ii), p.311]{Margulis}),
and Mal'cev famously observed that finitely generated linear groups 
are residually finite \cite{Malcev}. 
Moreover $\G$ has no non-trivial finite normal subgroup 
(this follows from Borel's Density Theorem;
see \cite[Corollary 4.42]{Witte-Morris}). 
Thus we are in the situation of Lemma \ref{lem2}:
if the image of $\varphi : \Gamma \to \Gamma$ has finite index then
$\varphi$ is injective. 

The group $\G$ has a torsion-free subgroup $H$ of finite index,
by Selberg's Lemma (\cite{Sel}; 
see also \cite[Theorem 4.60]{Witte-Morris}).
Let $H_k=\varphi^{-k}(H)$, for $k\geq0$.
Then $[\G:H_k]\leq[\G:H]$ and $H_k$ is torsion-free, 
since $\varphi$ is injective, for all $k\geq0$.
Since $\G$ is finitely generated it has only finitely many subgroups of 
index bounded by $[\G:H]$.
Therefore there are integers $m,n>0$ such that $H_{m+n}=H_n$,
and hence $\varphi^m(H_n)\leq{H_n}$.
If $\G$ is cocompact then so is $H_n$ and in this case
$\varphi^m(H_n)=H_n$, by Theorem 2 of \cite{Re} (or by using any suitable
volume-type invariant, in particular L\"oh and Sauer's Lipschitz simplicial volume \cite{LoS}).
Since $\G$ has no non-trivial finite normal subgroup,
it follows that $\varphi^m(\G)=\G$ also,
and hence $\varphi$ is an automorphism.

If $\Gamma$ is irreducible and $G$ is not $PSL(2,\mathbb{R})$ 
the result follows from Mostow-Prasad rigidity \cite[Theorem B]{Prasad}.
If $G=PSL(2,\mathbb{R})$ then $\G$ is either cocompact or virtually free,
and we may use multiplicativity of the rational Euler characteristic,
as indicated in the next section.
\end{proof}

{\em Added in proof:} Since writing this article we learned of the existence of \cite{HJ} in which
Humphreys and Johnson study the finite co-Hopf property for lattices.  The first paragraph of the 
preceding proof reduces Proposition \ref{TRHVH} to a special case of their main theorem.

\medskip

Products of irreducible lattices can be dealt with by combining 
the above argument with consideration of the structure of centralisers. 
And in the higher rank case,
far more general results are implied by Margulis super-rigidity \cite{Margulis}. 

Results such as Proposition \ref{TRHVH} allow  one to extend 
the range of maps in classical rigidity theorems. For example:

\begin{corollary} \label{c:mostow}
Let  $M_1$ and $M_2$ be finite volume hyperbolic $n$--manifolds, $n\geq 3$.  
Suppose there are proper open surjective maps $f:M_1\to M_2$ 
and $g:M_2\to M_1$.  
Then $M_1$ and $M_2$ are isometric.   
\end{corollary}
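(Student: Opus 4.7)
The plan is to combine Proposition \ref{Walsh} with the cofinite Hopf property established in Proposition \ref{TRHVH}, and then invoke Mostow--Prasad rigidity to upgrade the resulting isomorphism of fundamental groups to an isometry.

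First, by Proposition \ref{Walsh}, the proper open surjections $f$ and $g$ induce homomorphisms $f_* : \pi_1 M_1 \to \pi_1 M_2$ and $g_* : \pi_1 M_2 \to \pi_1 M_1$ whose images have finite index. Writing $\G_i = \pi_1 M_i$, each $\G_i$ is an irreducible lattice in the connected, centreless, simple Lie group $\mathrm{Isom}^+(\IH^n)$, so Proposition \ref{TRHVH} tells us that $\G_i$ is cofinitely Hopfian; in the non-orientable case this requires a short supplementary argument to handle the index-two extension inside the disconnected group $\mathrm{Isom}(\IH^n)$, but the conclusion is the same.

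Now consider the endomorphism $g_* \circ f_* : \G_1 \to \G_1$. Its image has finite index, so by cofinite Hopficity it must be an automorphism. This forces $f_*$ to be injective and $g_*$ to surject onto $\G_1$. The symmetric argument applied to $f_* \circ g_* : \G_2 \to \G_2$ shows that $g_*$ is injective and $f_*$ is surjective. Hence both $f_*$ and $g_*$ are isomorphisms of fundamental groups.

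Finally, since $n \geq 3$, Mostow--Prasad rigidity \cite{Prasad} ensures that any isomorphism between the fundamental groups of finite-volume hyperbolic $n$-manifolds is realised by an isometry; applying this to $f_*$ yields an isometry $M_1 \to M_2$. None of the steps is especially difficult once Proposition \ref{TRHVH} is in hand, and the only place where care is needed is in confirming that this Proposition is genuinely applicable to $\G_i$ itself (rather than merely to a finite-index subgroup) in the non-orientable case.
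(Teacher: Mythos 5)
Your proposal is correct and follows essentially the same route as the paper: reduce to the cofinite Hopf property of the lattices via Proposition \ref{TRHVH}(2) applied to the compositions, conclude that $f_*$ and $g_*$ are isomorphisms, and finish with Mostow--Prasad rigidity. The only cosmetic difference is that the paper composes the maps $g\circ f$ and $f\circ g$ at the level of spaces before applying Proposition \ref{Walsh}, whereas you apply it to $f$ and $g$ separately and compose the induced homomorphisms; your remark about the non-orientable case is a reasonable extra precaution that the paper elides.
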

\begin{proof}
The compositions $h= g\circ f :M_1 \to M_1$ and $h'= f\circ g :M_2 \to M_2$ 
are proper, open and surjective. 
Thus by Proposition \ref{Walsh}, 
$h_*\pi_1(M_1)$ has finite index in $\pi_1(M_1)$ and similarly for $h'_{*}$.  
Proposition \ref{TRHVH}(2) tells us
that both $h_*$ and $h'_*$ 
are in fact isomorphisms.   
Hence $f_*$ and $g_*$ are isomorphisms
and Mostow rigidity applies.
\end{proof}
 
Corollary \ref{c:mostow} applies in particular to
hyperbolic knot complements.  Knot complements will be studied  in
more detail in  Section \ref{s:knots}.

\section{Various Hopf properties and volume-type invariants}\label{s:defs}

There are several more notions related to the Hopf property 
that we should consider.
A group $\G$ is {\it finitely co-Hopfian\/} if every injective endomorphism 
with image of finite index is an automorphism.
It is ({\it finitely\/}) {\it hyper-Hopfian\/} if every endomorphism $\varphi$ 
such that $\varphi(\Gamma)$ is a normal subgroup with (finite) cyclic quotient
$\Gamma/\varphi(\Gamma)$ is an automorphism.
Next, we   say that $\Gamma$ satisfies the {\it volume condition\/} 
if isomorphic subgroups of finite index necessarily have the same index.
Finally, we  say that $\Gamma$ satisfies the {\it rank condition\/} 
if any proper subgroup  of finite index requires strictly more generators that $\G$ does.

Only a little argument is needed to see that if $\Gamma$ is cofinitely Hopfian,  
then $\Gamma$ is finitely hyper-Hopfian and finitely co-Hopfian; that 
finitely hyper-Hopfian groups are Hopfian; and that
groups satisfying the volume condition 
or the rank condition are finitely co-Hopfian.
As recalled earlier, residually finite groups are Hopfian.

A useful example to keep in mind is $\IZ_2=\IZ/2\IZ$, 
which is Hopfian and co-Hopfian
but not finitely hyper-Hopfian.

Non-abelian free groups of finite rank and hyperbolic surface groups 
satisfy both the volume condition and the rank condition,
by multiplicativity of the Euler characteristic,
while irreducible lattices
in semi-simple Lie groups satisfy the volume condition but in general
do not satisfy the rank condition (consideration of hyperbolic
3-manifolds that fibre over the circle is enough to prove this).

Less obviously, Reznikov \cite{Re} defined several ``volume-type" invariants of
groups which take values in $[0,\infty)$ and are multiplicative on
passing to subgroups of finite index: if $\G_0<\G$ is a subgroup of
index $d$ then the invariant of $\G_0$ is $d$ times that of $\G$. (These are
called multiplicative invariants in \cite{HJ}.)
He showed that one such invariant,
the ``rank volume" $V_r(\G)$ 
is strictly positive if $\G$ has a presentation of deficiency greater than 1, 
and that $V_r(\G)\geq{V_r(\G/H)}$ for any $H\trianglelefteq\G$.
It follows that every group with deficiency greater than $1$ 
satisfies the volume condition, 
and that it is cofinitely Hopfian if it is Hopfian.

A more classical example of a volume-type invariant
is the {\em{rational Euler characteristic}}.
We recall that if a group $G$ has a subgroup $H$ of finite index 
that has a compact  classifying space
$K(H,1)$, then the rational Euler characteristic
$\chi^{\virt}(\G):=[G:H]^{-1}\chi(H)$ is well defined and multiplicative. 
Borel and Serre \cite{BorelSerre} proved that lattices in semisimple Lie groups have
a well-defined rational Euler characteristic.

Thinking of the rational Euler characteristic in topological terms, one sees
that it is an example of a volume-type invariant that arises in the following
way: one has a class of
spaces (or orbispaces) on which an invariant is defined; each space is defined uniquely
up to some notion of equivalence (e.g. homotopy equivalence or
homeomorphism) by its fundamental group, and the invariant is
constant on equivalence classes; the class is closed under
passage to finite-sheeted covers and the invariant
multiplies  by the index on passage to a finite-sheeted cover. 
 If a group $G$ is torsion-free, residually finite and is
the fundamental group of a space in the class that has non-zero invariant,
 then  $G$ is cofinitely Hopfian.

Gromov's simplicial volume \cite{gromov} fulfills these conditions 
in classes of manifolds that are determined up to homeomorphism
by their fundamental group. 
Classes of manifolds satisfying this condition and for which
this volume is positive include compact locally-symmetric spaces 
of non-compact type and of dimension $\not=4$ \cite{FJ,LS}.
L\"oh and Sauer's notion of Lipschitz simplicial volume is a useful
variant on Gromov's definition: this is non-zero in interesting cases  
where the simplicial volume
vanishes, for example certain of the non-compact locally symmetric
spaces that arise in the context of our Proposition 4.2. In particular, Theorem
1.5 and Corollary 1.6 of \cite{LoS} imply that 
the fundamental group of any locally symmetric space of
non-compact type is cofinitely Hopfian.

\section{Free-by-cyclic groups}\label{s:F.Z}

Finitely presented free-by-cyclic groups have received a great
deal of attention in recent years in part because they form a 
rich context in which to draw out distinctions between the different
notions of non-positive curvature in group theory (cf.~\cite{mb-icm}).
In this section we shall determine which of these groups are
cofinitely Hopfian.

\noindent{\em{Notation:}}
In what follows we let $\Gamma'$, $Z\Gamma$ and $\cd(\Gamma)$
denote the commutator subgroup, centre and cohomological dimension,
respectively, of the group $\Gamma$.
Let $F_r$ denote the free group of rank $r$.

We need the following two lemmas.

\begin{lemma}\label{lem4}
Let $\Gamma={F_r}\rtimes{\IZ}$ and suppose that $\varphi:\Gamma\to{\Gamma}$ 
is an endomorphism such that $N=\varphi(\Gamma)$ is normal in $\Gamma$
and $\Gamma/N\cong{\IZ}$.
Then, either $\Gamma\cong{F_r}\times{\IZ}$ or $\Gamma\cong{\IZ}\rtimes_{-1}\IZ$.
\end{lemma}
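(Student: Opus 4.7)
The plan is to use the freeness of $\Gamma/N \cong \IZ$ to split the sequence $1\to N\to\Gamma\to\IZ\to 1$, giving $\Gamma \cong N\rtimes\IZ$. Setting $K := N\cap F_r$, note that $K$ is $\theta$-invariant (since both $F_r$ and $N$ are normal in $\Gamma$), and that $F_r/K$ embeds in $\Gamma/N\cong\IZ$, so $F_r/K$ is either trivial or infinite cyclic.

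First consider the case $F_r\leq N$: then $N/F_r\leq\Gamma/F_r\cong\IZ$ has quotient $\Gamma/N\cong\IZ$, forcing $N = F_r$. Writing $\alpha := \varphi|_{F_r}\colon F_r\to F_r$ and $\gamma := \varphi(t)\in F_r$, the compatibility $\varphi(tft^{-1}) = \varphi(\theta(f))$ becomes $\gamma\,\alpha(f)\,\gamma^{-1} = \alpha(\theta(f))$, and the hypothesis $\varphi(\Gamma) = F_r$ gives $\langle\alpha(F_r),\gamma\rangle = F_r$. Since $\gamma$ normalizes $\alpha(F_r)$, this subgroup is normal in $F_r$ with cyclic quotient generated by the class of $\gamma$. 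When $r\geq 2$, a rank count rules out all non-trivial cyclic quotients: an infinite cyclic quotient would force $\alpha(F_r)$ to have infinite rank (impossible, being a homomorphic image of $F_r$), and a finite cyclic quotient of order $n\geq 2$ would, by Nielsen--Schreier, require $\alpha(F_r)$ to have rank $n(r-1)+1>r$, again impossible. Hence $\alpha$ is surjective, and therefore an automorphism of $F_r$ by Hopficity; the relation then reads $\theta(f) = \alpha^{-1}(\gamma)\,f\,\alpha^{-1}(\gamma)^{-1}$, so $\theta$ is inner and $\Gamma\cong F_r\times\IZ$. When $r = 1$ the relation collapses to $\alpha\circ\theta = \alpha$ on the abelian group $F_1 = \IZ$, forcing $\theta = \pm 1$ and yielding $\Gamma\cong\IZ^2$ or $\Gamma\cong\IZ\rtimes_{-1}\IZ$.

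In the remaining case, $F_r\not\leq N$ and $F_r/K\cong\IZ$. The plan is to reduce to the previous case by exchanging the free normal subgroup $F_r$ for $N$. For $r\geq 1$ the group $\Gamma$ has $\cd(\Gamma) = 2$ (it admits a $2$-dimensional classifying space and is not free), and the split decomposition $\Gamma = N\rtimes\IZ$ together with the cohomological-dimension formula for split extensions by $\IZ$ forces $\cd(N) = 1$, so by Stallings--Swan $N$ is a finitely generated free group. Then $\Gamma\cong N\rtimes_\psi\IZ$ is another free-by-cyclic decomposition in which $\varphi(\Gamma) = N$ is the entire free normal subgroup; the previous case applied to this new decomposition shows $\psi$ is inner, and hence $\Gamma\cong N\times\IZ$, which is a free-times-cyclic group and so falls under the conclusion of the lemma.

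The main obstacle is the second case, specifically establishing that $N$ is free: this requires the cohomological-dimension formula for split extensions by $\IZ$. An alternative line would use Bass--Serre theory, analyzing the action of $N$ on the HNN tree for $\Gamma = F_r\rtimes_\theta\IZ$: the vertex and edge stabilizers lie in conjugates of $F_r$ and are hence free, and finite generation of $N$ lets one control the resulting graph of groups to conclude that $N$ itself is free.
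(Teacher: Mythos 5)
Your Case 1 (where $F_r\leq N$, forcing $N=F_r$) is correct and is a pleasant elementary variant of the paper's argument: the Nielsen--Schreier rank count showing $\alpha(F_r)=F_r$ and the deduction that $\theta$ is inner are sound. The genuine gap is in Case 2, at precisely the step you flag as the main obstacle: showing that $N$ is free. The ``cohomological-dimension formula for split extensions by $\IZ$'', namely $\cd(A\rtimes\IZ)=\cd(A)+1$, is false without a finiteness hypothesis of type $FP$ on $A$, and your own situation furnishes the counterexample. In Case 2 the subgroup $K=N\cap F_r$ is a non-trivial normal subgroup of infinite index in $F_r$, hence an infinitely generated free group, and $N=K\rtimes\IZ$; if the unconditional formula were valid it would give $\cd(N)=\cd(K)+1=2$, contradicting the $\cd(N)=1$ you are simultaneously extracting from $\Gamma=N\rtimes\IZ$. (The classical instance is $F_2=K'\rtimes\IZ$ with $K'$ the infinitely generated kernel of a surjection $F_2\to\IZ$.) What the paper does at this point is invoke the coherence of free-by-cyclic groups (Feighn--Handel) to conclude that the finitely generated subgroup $N$ is finitely presentable; combined with $\cd(N)\leq 2$ this makes $N$ of type $FP$, which is the hypothesis under which Bieri's Theorem 5.6 yields $\cd(N)=\cd(\Gamma)-1=1$ and hence freeness. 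Your proof establishes no finiteness property of $N$ beyond finite generation, so the key step rests on a false statement, and the missing ingredient is a substantial theorem rather than a routine verification.

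Your fallback sketch via Bass--Serre theory does not close the gap either: the Bass--Serre tree of the HNN splitting $\Gamma=F_r\ast_{F_r}$ is a line, and the vertex and edge stabilizers of the induced $N$-action all equal $K=N\cap F_r$, so the splitting you obtain is exactly $N=K\rtimes\IZ$ with $K$ infinitely generated, which (as the example of $F_2$ shows) does not by itself decide whether $N$ is free. On the positive side, once $N$ is known to be free your reduction of Case 2 to Case 1, replacing the normal free subgroup $F_r$ by $N$, does work and yields $\Gamma\cong F_s\times\IZ$ for $s$ the rank of $N$, which suffices for the way the lemma is used; the paper instead takes $r$ minimal from the outset and uses an Euler-characteristic computation to show $\varphi(F_r)=N$ and $s=r$ directly.
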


\begin{proof} The non-trivial assertion in the lemma
concerns the case $r\ge 2$. 
Thus we assume that $r\ge 2$ and that $\G$ has no decomposition 
$F_\rho\rtimes\IZ$ with $\rho<r$. 
 
The group $\Gamma$ is coherent \cite{FH}, 
and so the finitely generated subgroup $N$ is finitely presentable, 
and  $\cd(N)\leq{\cd(\Gamma)}=2$.
Therefore $\cd(N)=\cd(\Gamma)-\cd(\IZ)=1$, by Theorem 5.6 of \cite{Bi}.
Hence $N$ is also free, of finite rank $s$, say.
Clearly $s\geq{r}$, by minimality of $r$.

Let $p:\Gamma\to\IZ$ be an epimorphism with kernel $F\cong{F_r}$.
The image of $F$ in $N$ is a finitely generated normal subgroup. 
Therefore either $\varphi(F)=1$ or $\varphi(F)$ has finite index in $N$.
If $\varphi(F)=1$ then $N$ is cyclic, contrary to the hypothesis $r>1$.
Hence $[N:\varphi(F)]$ is finite, and so $\varphi(F)$ is free
of finite rank $t$, say.
Now $\chi(\varphi(F))=1-t=[N:\varphi(F)]\chi(N)=[N:\varphi(F)](1-s)\not=0$,
and $t\leq{r}\leq{s}$, since $\varphi(F)$ is a quotient of $F$.
Hence $\varphi(F)=N$ and $t=s$, so $s=r$.
Therefore $\varphi|_F:F\to{N}$ is an isomorphism, by the Hopficity of $F_r$.
Let $q=(\varphi|_F)^{-1}\circ\varphi:\Gamma\to{F}$.
Then $(q,p):\Gamma\to{F}\times{\IZ}$ is an isomorphism. 
\end{proof}

The groups $\Gamma\cong{F_r}\times{\IZ}$ and $\Gamma\cong{\IZ}\rtimes_{-1}\IZ$
are residually finite but are not finitely hyper-Hopfian.
 
\begin{lemma}\label{l:ali}
If $\theta\in {\rm Out}(F_r)$ has infinite order, then the set of integers
$\{n\mid \exists \psi\ {\rm{ with }}\ \psi^n=\theta\}$ is finite.
\end{lemma}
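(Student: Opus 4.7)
My plan is to combine virtual torsion-freeness of $\mathrm{Out}(F_r)$ with a multiplicative, algebraic-integer-valued invariant attached to infinite-order outer automorphisms. The group $\mathrm{Out}(F_r)$ has a torsion-free subgroup $K$ of finite index $d$ (for instance, the kernel of the reduction $\mathrm{Out}(F_r)\to\mathrm{GL}(r,\IZ/3)$). If $\psi^n=\theta$, then $(\psi^d)^n=(\psi^n)^d=\theta^d$, and both $\psi^d$ and $\theta^d$ lie in $K$; since $\theta$ has infinite order and $K$ is torsion-free, $\theta^d$ also has infinite order. It therefore suffices to bound those $n$ for which $\theta^d$ is an $n$-th power in $K$.

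Next, I would invoke the Bestvina--Handel theory of relative train tracks to assign to each $\phi\in\mathrm{Out}(F_r)$ its Perron--Frobenius stretch factor $\lambda(\phi)\geq 1$. This is an algebraic integer whose degree over $\IQ$ is bounded uniformly in terms of $r$, and it satisfies $\lambda(\phi^n)=\lambda(\phi)^n$. If $\theta$ grows exponentially, then $\lambda(\theta^d)>1$; any $\alpha\in K$ with $\alpha^n=\theta^d$ would satisfy $\lambda(\alpha)=\lambda(\theta^d)^{1/n}$, and by Capelli's theorem the degree $[\IQ(\lambda(\theta^d)^{1/n}):\IQ]$ grows without bound as $n\to\infty$, contradicting the uniform degree bound on $\lambda(\alpha)$ for all sufficiently large $n$.

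In the remaining case, where $\theta$ has polynomial growth, a power of $\theta$ lies deep in the Andreadakis--Johnson filtration of the Torelli subgroup $\mathrm{IA}_r$, and at each level one has a Johnson-type homomorphism into a finitely generated free abelian group. Since $\theta$ has infinite order, its image is nonzero at some level, and in a free abelian group a fixed nonzero element is an $n$-th power for only finitely many $n$. Note that $\lambda(\psi)^n=\lambda(\theta)=1$ forces $\lambda(\psi)=1$, so any $\psi$ with $\psi^n=\theta$ also grows polynomially, and the Johnson-type invariant applies to both sides symmetrically. The main obstacle, I expect, will be controlling all potential roots $\psi$ simultaneously at the correct level of the filtration; a cleaner alternative is to invoke the Bestvina--Feighn--Handel description of centralisers of polynomially-growing outer automorphisms, where such centralisers turn out to be virtually polycyclic and hence roots of a given infinite-order element furnish only boundedly many exponents.
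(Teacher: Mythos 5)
Your reduction to a torsion-free finite-index subgroup and your treatment of the exponentially growing case are essentially sound: the stretch factor of an outer automorphism is an algebraic integer of degree bounded in terms of $r$ (at most $3r-3$, the maximal number of edges of a marked graph), it satisfies $\lambda(\phi^n)=\lambda(\phi)^n$ for $n\geq1$, and $\lambda(\theta^d)^{1/n}$ cannot have bounded degree for arbitrarily large $n$ (though Northcott's theorem on heights, or the known fact that stretch factors in ${\rm Out}(F_r)$ are uniformly bounded away from $1$, is a safer route than Capelli, which requires you to rule out $\lambda(\theta^d)$ being a perfect power in its own field). The polynomially growing case, however, contains genuine gaps. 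First, a polynomially growing outer automorphism need not have any power in ${\rm IA}_r$: its image in ${\rm GL}(r,\IZ)$ is only quasi-unipotent, and a nontrivial unipotent matrix has infinite order --- for example $a\mapsto a$, $b\mapsto ba$ grows linearly but no power of it lies in the Torelli subgroup. So the case where the obstruction lives at level $0$, in ${\rm GL}(r,\IZ)$, whose graded piece is not free abelian, is simply not covered and needs a separate argument. Second, your ``cleaner alternative'' rests on a false premise: centralisers of infinite-order polynomially growing elements of ${\rm Out}(F_r)$ need not be virtually polycyclic. For $\theta\in{\rm Out}(F_4)$ fixing $a,b,c$ and sending $d\mapsto d[a,b]$, every automorphism that fixes $c$ and $d$ and restricts on $\langle a,b\rangle$ to an automorphism fixing the word $[a,b]$ commutes with $\theta$; these include $a\mapsto ab,\ b\mapsto b$ and $a\mapsto a,\ b\mapsto ba$, which generate a subgroup surjecting onto ${\rm SL}(2,\IZ)$, so the centraliser contains nonabelian free subgroups. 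Third, the issue you flag about forcing a root $\psi$ to lie at the same filtration level as $\theta$ is resolvable (the graded quotients from level $1$ on are torsion-free, so $n[\psi]=0$ forces $[\psi]=0$), but you would still need to check that the filtration separates points of ${\rm Out}(F_r)$ rather than ${\rm Aut}(F_r)$.

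For comparison, the paper's proof is a two-line deduction from a single theorem of Alibegovi\'c: the translation length $\tau$, taken with respect to a word metric on ${\rm Out}(F_r)$, is bounded below by a constant $\epsilon_r>0$ on all infinite-order elements. Since $\tau(\psi^n)=|n|\,\tau(\psi)$ and any root $\psi$ of the infinite-order element $\theta$ must itself have infinite order, the relation $\psi^n=\theta$ gives $|n|\leq\tau(\theta)/\epsilon_r$. This handles the exponential and polynomial cases uniformly and avoids train tracks, eigenvalue arithmetic and the Johnson filtration altogether. Your approach can probably be salvaged by adding the ${\rm GL}(r,\IZ)$ step and discarding the centraliser claim, but it would then be far longer than the argument it replaces.
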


\begin{proof} This is an immediate consequence of a theorem
of Emina Alibegovi\'c \cite{ali}. Her result is cast in the language of
translation lengths. Recall that the translation length $\tau(g)$
of an element $g$ in a group $G$ with finite generating set $A$
is defined to be $\lim_m d(1,g^m)/m$, where $d$ is the word metric
associated to $A$. Since $\tau(g^n)=|n|\tau(g)$ for all $n\in\Z$,
the lemma would be proved if we knew that there was a positive
constant $\epsilon_r>0$ such that $\tau(\theta)>\epsilon_r$ for all  
$\theta\in {\rm Out}(F_r)$ of infinite order. And this is exactly
what Alibegovi\'c proves.
\end{proof}

\begin{theorem}\label{thm5}
Let $\Gamma={F_r}\rtimes_{\theta}{\IZ}$.
Then the following are equivalent:
\begin{enumerate}
\item $\Gamma$ is cofinitely Hopfian;

\item $\Gamma$ is finitely co-Hopfian;

\item  $\Gamma$ is hyper-Hopfian;

\item $\Gamma$ is finitely hyper-Hopfian.

\item $\Gamma$ has trivial centre.
\end{enumerate}
\end{theorem}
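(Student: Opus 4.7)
My plan is to close the cycle of implications among the five conditions. The implications $(1)\Rightarrow(2)$, $(1)\Rightarrow(4)$ and $(3)\Rightarrow(4)$ hold in complete generality (Section~\ref{s:defs}), so it remains to establish $(2)\Rightarrow(5)$, $(4)\Rightarrow(5)$, $(5)\Rightarrow(1)$ and $(5)\Rightarrow(3)$. A preliminary computation in the semidirect product shows that $(w,n)$ is central in $\Gamma$ iff $\theta^n$ coincides with conjugation by $w$ inside $F_r$ and $\theta(w)=w$; for $r\geq 2$, the second condition follows automatically from the first because $Z(F_r)=1$. Hence condition (5) is equivalent to the assertion that $\theta$ has infinite order in ${\rm{Out}}(F_r)$.

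For $(2)\Rightarrow(5)$ and $(4)\Rightarrow(5)$, I would argue the contrapositive with a single construction. Assuming $Z(\Gamma)\neq 1$ and $r\geq 2$, pick $n\geq 1$ with $\theta^n$ equal to conjugation by some $w\in F_r$, so that $z=w^{-1}t^n$ is a non-trivial infinite-order central element. The assignment $\varphi|_{F_r}=\mathrm{id}$ and $\varphi(t)=tz$ extends to an endomorphism of $\Gamma$ by centrality of $z$. Routine bookkeeping (using $\theta(w)=w$ to rewrite $tz=w^{-1}t^{n+1}$) shows that $\varphi(\Gamma)=\langle F_r,t^{n+1}\rangle$ is normal in $\Gamma$ of index $n+1$ with cyclic quotient, and projecting to $\Gamma/F_r=\Z$ shows that $\varphi$ is injective but not surjective. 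This single map refutes both (2) and (4); the small cases $r\leq 1$ are handled by analogous direct constructions.

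The heart of the theorem is $(5)\Rightarrow(1)$. Let $\varphi\colon\Gamma\to\Gamma$ have image $N$ of finite index. A preliminary structural point: every finite-index subgroup of $\Gamma$ is itself free-by-cyclic, since $N\cap F_r$ has finite index in $F_r$ (hence is free of finite rank, say $F_s$), and $N/(N\cap F_r)$ embeds into $\Gamma/F_r=\Z$, giving $N\cong F_s\rtimes_{\theta'}\Z$ with $\theta'$ arising from a conjugate of a power $\theta^k|_{F_s}$. Baumslag's theorem that finitely generated free-by-cyclic groups are residually finite ensures that $\Gamma$ is Hopfian. Iterating $\varphi$ gives a nested chain $\Gamma\supseteq\varphi(\Gamma)\supseteq\varphi^2(\Gamma)\supseteq\ldots$; the restriction of $\varphi$ to the stable subgroup of this chain is a surjective, hence automorphic, endomorphism, which forces $\ker\varphi=1$ and reduces us to the case where $\varphi$ is injective. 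To upgrade injectivity to surjectivity I would invoke hypothesis~(5) via Lemma~\ref{l:ali}: if $[\Gamma:N]>1$, continued iteration exhibits $\theta$ (or its restriction to a $\theta$-invariant finite-index free subgroup) as an $m$-th root in the relevant outer automorphism group for arbitrarily large $m$, contradicting Alibegovi\'c's bound.

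Finally, $(5)\Rightarrow(3)$ combines $(5)\Rightarrow(1)$ with Lemma~\ref{lem4}: if $\varphi(\Gamma)\trianglelefteq\Gamma$ has cyclic quotient, either the quotient is finite (handled by (1)) or it is $\Z$, in which case Lemma~\ref{lem4} forces $\Gamma\cong F_r\times\Z$ or $\Z\rtimes_{-1}\Z$, both of which have non-trivial centre and so contradict (5). The main obstacle in this scheme is the step within $(5)\Rightarrow(1)$ that converts a strictly descending chain of finite-index images into an unbounded sequence of roots of $\theta$: one must carefully transport outer classes $\theta'\in{\rm{Out}}(F_s)$ attached to free-by-cyclic subgroups of $\Gamma$ back to ${\rm{Out}}(F_r)$, using $\theta$-invariance of suitable subgroups and the arithmetic of the projection to $\Gamma/F_r=\Z$, and then feed the resulting roots into Lemma~\ref{l:ali} to obtain the contradiction.
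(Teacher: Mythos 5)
Your architecture is sound and you have correctly identified most of the key ingredients: the equivalence of (5) with $[\theta]$ having infinite order in ${\rm Out}(F_r)$, the explicit non-surjective endomorphism $\varphi|_{F_r}={\rm id}$, $\varphi(t)=tz$ that refutes (2) and (4) when the centre is non-trivial (this is a legitimate variant of the paper's construction), Lemma~\ref{lem4} for the hyper-Hopfian direction, and Alibegovi\'c's translation-length bound for the main implication. However, the central implication $(5)\Rightarrow(1)$ contains two genuine gaps. The first is your reduction to the injective case: the chain $\Gamma\supseteq\varphi(\Gamma)\supseteq\varphi^2(\Gamma)\supseteq\ldots$ need not stabilise at any finite stage, the intersection $\bigcap_n\varphi^n(\Gamma)$ need not be finitely generated or of finite index, and there is no reason $\varphi$ restricts to a \emph{surjection} of it, so ``surjective, hence automorphic'' does not follow and $\ker\varphi=1$ is not established. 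The correct tool is Hirshon's result (Lemma~\ref{lem2}): a finitely generated, residually finite group with no non-trivial finite normal subgroup (here, torsion-freeness suffices) has the property that every endomorphism with finite-index image is injective. This is how the paper gets $(2)\Rightarrow(1)$ and the injectivity needed for the main argument.

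The second gap is the one you yourself flag as ``the main obstacle'' and do not resolve, and it is precisely where the work lies. Your decomposition $N\cong F_s\rtimes\Z$ with $F_s=N\cap F_r$ produces outer classes living in ${\rm Out}(F_s)$ for varying $s$, and since Alibegovi\'c's lower bound $\epsilon_r$ on translation lengths depends on the rank, a sequence of roots scattered across different ${\rm Out}(F_s)$ yields no contradiction. The paper's device is to normalise at the outset: assume $r$ is \emph{minimal} among the ranks of free normal subgroups of $\Gamma$ with infinite cyclic quotient, and then show — using coherence and cohomological dimension to see that finite-index subgroups of $\ker(g)$ are free, an Euler characteristic count, and the minimality of $r$ — that $M=\varphi(F_r)$ is exactly the kernel of an epimorphism $g:\Gamma\to\Z$, hence free of rank exactly $r$ and normal in all of $\Gamma$, with $N=M\rtimes_{\psi^d}\Z$ where $\psi$ is conjugation by $t$ on $M$. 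This simultaneously shows that $N$ is normal with quotient $\Z/d\Z$ (which is how the paper gets $(4)\Rightarrow(2)$) and that $[\theta]$ is conjugate to $[\psi]^d$ in ${\rm Out}(F_r)$ itself; iterating $\varphi$ then gives $d^n$-th roots of $[\theta]$ for all $n$, which Lemma~\ref{l:ali} forbids when $[\theta]$ has infinite order. Without some such normalisation keeping every root in the single group ${\rm Out}(F_r)$, your scheme does not close.
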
 

\begin{proof} The theorem is obvious if $r=1$, so we assume that $r\ge 2$  is minimal among 
the ranks of all free normal subgroups of $\G$ with cyclic 
quotient.

The implications $(1)\Rightarrow(2)$, 
$(1)\Rightarrow(4)$ and $(3)\Rightarrow(4)$ are clear, 
while $(2)\Rightarrow(1)$ follows from Lemma \ref{lem2}, 
since $\Gamma$ is finitely generated, residually finite and torsion free.
The implication $(4)\Rightarrow(3)$ follows from Lemma \ref{lem4},
since neither $\Gamma\cong{F_r}\times{\IZ}$ nor $\Gamma\cong{\IZ}\rtimes_{-1}\IZ$
is finitely hyper-Hopfian.

Let $[\theta]$ be the image of $\theta$ in ${\rm Out}(F_r)$
and note that
$Z{\Gamma}=1$  if and only if $[\theta]$ has infinite order.
Suppose that $[\theta]^d=1$ for some finite $d>0$.
If $s\equiv1 \mod d$,  then $[\theta^s]=[\theta]$, 
and so $\Gamma\cong{F_r}\rtimes_{\theta^s}{\IZ}$,
which is isomorphic to a normal subgroup of 
index $s$ in $\Gamma$, with quotient $\IZ/s\IZ$.
Thus $\pi$ is not finitely hyper-Hopfian, and so $(4)\Rightarrow(5)$.

We shall show that $(4)\Rightarrow(2)$ in the course of showing that $(5)\Rightarrow(2)$.
Suppose that $\varphi$ is an injective endomorphism of 
$\Gamma=F_r\rtimes_\theta{\IZ}$ with image $N$ of finite index $d$.
Let $M=\varphi(F_r)$.
Let $\Gamma^\tau<\Gamma$ be the preimage of the torsion subgroup of $\Gamma^{ab}=\Gamma/\Gamma'$.
Then $\Gamma'\leq \Gamma^\tau$ and $\Gamma/\Gamma^\tau\cong \IZ^\beta$.
The image of $N/N^\tau\cong{\IZ}^\beta$ in $\Gamma/\Gamma^\tau$ has finite index, 
since $[\Gamma:N]<\infty$.
Therefore there is an epimorphism $g:\Gamma\to{\IZ}$ such that
$M=N\cap\ker(g)$.
In particular, $M$ has finite index in $\ker(g)$.
Hence $M=\ker(g)$, by minimality of $r$,
and so $N$ is normal in $\Gamma$ and $\Gamma/N\cong{\IZ/d\IZ}$.
Thus $(4)\Rightarrow(2)$, and we have
proved that the first four conditions are equivalent.

Continuing with the notation of the previous paragraph,
we have $\G/M\cong\Z$. Fix $t\in\Gamma$ such that $g(t)$ generates $G/M$. 
Conjugation by $t$ induces an automorphism $\psi\in{\rm{Aut}}(M)$. 
Then $N$ is generated by $M$ and $t^d$ and $\varphi$ induces 
an isomorphism from $\G=F_r\rtimes_\theta\Z$ to $N=M\rtimes_{\psi^d}\Z$ 
that is compatible with the given semidirect product decompositions. 
It follows that the image of $\phi$ is conjugate to the image of $\psi^d$ in ${\rm Out}(F_r)$. 
By considering iterates of $\varphi$, we conclude that the image of $\theta$ 
has a $d^n$-root in ${\rm Out}(F_r)$ for all positive integers $n$. 
By Lemma \ref{l:ali}, it follows that $\theta$ has finite order.
Thus $(5)\Rightarrow(2)$. 
This completes the proof.
\end{proof}

If $\Gamma/\Gamma^\tau\cong\IZ$ then the 
preceding proof can be simplified. First, the implication $(4)\Rightarrow(1)$ follows from 
the observation that if $\Gamma=N\rtimes_\theta{\IZ}$ where $N$ is cofinitely Hopfian and 
$\varphi:\Gamma\to{\Gamma}$ is an endomorphism with image of finite index and $\varphi(N)\leq{N}$, 
then $\varphi$ is a monomorphism onto a normal subgroup and $\Gamma/\varphi(\Gamma)$ is finite cyclic.
And in proving  $(5)\Rightarrow(2)$ one does not need Lemma \ref{l:ali}.

\begin{remark}\label{r:coH}
The  conditions listed in the preceding
theorem do not imply that $\Gamma$ is co-Hopfian.
For instance, if $K$ is a composite of fibred knots or 
is an algebraic knot other than a torus knot and 
$\Gamma$ is the fundamental group of $\S^3\smallsetminus K$,  
then $\Gamma/\Gamma'\cong{\IZ}$, $\Gamma'$ is free and $Z{\Gamma}=1$
but $\Gamma$ is not co-Hopfian \cite{GW}.
\end{remark}

Let $G=F*_\theta$ be an ascending HNN extension, where $F=F_r$ for some $r>1$ and
$\theta:F\to{F}$ is a monomorphism with image a proper subgroup.
Then $G$ has trivial centre and is Hopfian \cite{GMSW}. 
Is it cofinitely Hopfian?
Lemma \ref{lem4} may be adapted to exclude such groups,
while the implications $(1)\Leftrightarrow(2)$, 
$(1)\Rightarrow(4)$ and $(3)\Rightarrow(4)$ of Theorem \ref{thm5} hold.
Does $(4)\Rightarrow(2)$?
(If $r=1$ then $G$ is a solvable Baumslag-Solitar group, 
which is not finitely hyper-Hopfian.)

\section{3-Manifolds and Knot groups}\label{s:knots}

Let $M$ be a compact orientable 3-manifold and let $\pi=\pi_1(M)$. 
Then $M$ is Haken, hyperbolic or Seifert fibred,
as a consequence of Thurston's Geometrisation Conjecture,
now proven by Perelman.
In each case $\pi$ is residually-finite and hence Hopfian \cite{He,Th}.
If $M$ is closed then $\pi$ is co-Hopfian if and only if $M$ 
is irreducible and has no finite cover which is a direct product 
of a (closed) surface with $\S^1$ or a torus bundle over $\S^1$ \cite{WW}.
If $M$ is irreducible and has nonempty toral boundary then $\pi$ is co-Hopfian 
if and only if it is not $\IZ^2$ and no non-trivial Seifert fibred piece 
of the JSJ decomposition of $M$ meets $\partial{M}$, 
by Theorem 2.5 of \cite{GW}.
The group $\pi$ satisfies the volume condition 
(as defined in Section 5 above) if and only if 
it is either a proper free product other than $\Z/2*\Z/2$
or if $M$ is irreducible and has no finite cover which is a direct product 
of a surface with $\S^1$ or a torus bundle over $\S^1$ \cite{WW,WY}.

From these results we see:
if $K$ is a knot in $\IS^3$, then the knot group $\pi_K$ is Hopfian;
it is co-Hopfian if and only if $K$ is not a torus knot, 
cable knot or composite knot; and it satisfies the volume condition
(and is finitely co-Hopfian) if and only if $K$ is not a torus knot.
(See Corollaries 2.6 and 7.5 of \cite{GW}, \cite{WY} and subsection \ref{sst}
below.)  The following theorem provides the complementary
classification for the cofinite Hopf property.

We note also the following standard facts from knot theory.
Let $K$ be a knot with group $\pi=\pi_K$.
Then the abelianization $\pi/\pi'$ is infinite cyclic.
If $K$ is non-trivial then the image of the fundamental group of a 
Seifert surface of minimal genus is a non-abelian free subgroup of $\pi$.
In particular, the group of a fibred knot is free-by-cyclic.
Torus knots are fibred and their groups have infinite cyclic centres.
(See \cite{Ro}, for example.)

\begin{theorem}\label{thm6}
Let $K$ be a knot in $\S^3$.
The knot group $\pi_{K}$ is cofinitely Hopfian 
if and only if $K$ is not a torus knot.
\end{theorem}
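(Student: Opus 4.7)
The plan is to prove the two implications separately. For the ``only if'' direction I will apply Theorem \ref{t:F.Z}, and for the ``if'' direction I will combine Hirshon's Lemma \ref{lem2} with the classification of finite co-Hopfness for $3$--manifold groups recalled just before the theorem.

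Suppose first that $K$ is a torus knot. If $K$ is the unknot then $\pi_K\cong\Z$, which is not cofinitely Hopfian: the doubling map $n\mapsto 2n$ has image of index $2$ but is not an automorphism. Otherwise, using the facts recorded immediately above the theorem, $K$ is fibred and $Z\pi_K$ is infinite cyclic, so $\pi_K\cong F_r\rtimes_\theta\Z$ with $r\geq 2$ and $Z\pi_K\neq 1$. Theorem \ref{t:F.Z} then forbids $\pi_K$ from being cofinitely Hopfian.

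Now suppose that $K$ is not a torus knot, and let $\varphi:\pi_K\to\pi_K$ be a homomorphism with image of finite index; the goal is to conclude that $\varphi$ is an automorphism. First, I will verify the hypotheses of Lemma \ref{lem2} for $\pi_K$: it is finitely generated; it is residually finite by \cite{He,Th}, as reminded at the start of this section; and it is torsion-free (being the fundamental group of an aspherical manifold), so in particular it has no non-trivial finite normal subgroup. Hirshon's lemma then delivers injectivity of $\varphi$. Second, I will invoke the classification recalled above---originating in \cite{GW} and \cite{WY}---which states that $\pi_K$ is finitely co-Hopfian if and only if $K$ is not a torus knot; this upgrades the injective endomorphism $\varphi$ of finite index image to an automorphism, completing the proof.

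I do not anticipate any serious obstacle, since both main ingredients are already in place. The one point requiring a little care is that the finite co-Hopf classification must be used in its full 3-manifold form rather than via Theorem \ref{t:F.Z}: when $K$ is a non-fibred non-torus knot (for example a cable knot, or a composite with a non-fibred summand) the group $\pi_K$ need not be free-by-cyclic, so Theorem \ref{t:F.Z} is not available as a shortcut, and the results of \cite{GW,WY} are what keep the argument uniform across all non-torus knots.
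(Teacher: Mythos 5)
Your proposal is correct and follows essentially the same route as the paper: the torus-knot direction is handled via the free-by-cyclic theorem (using that torus knot groups are fibred with non-trivial centre), and the converse combines Hirshon's Lemma \ref{lem2} with the finite co-Hopficity of non-torus knot groups from \cite{GW}. Your added remarks on the unknot and on why the full $3$-manifold classification (rather than Theorem \ref{t:F.Z}) is needed for non-fibred knots are accurate but do not change the argument.
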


\begin{proof}
If $K$ is a torus knot then $\pi_{K}'$ is a finitely generated free group and
$Z\pi_{K}\not=1$.
Therefore $\pi_{K}$ is not cofinitely Hopfian, by Theorem \ref{thm5}.
(A more explicit proof is given below.)

Conversely, if $K$ is not a torus knot then $\pi_{K}$ is finitely co-Hopfian
\cite{GW}.
Since $\pi_{K}$ is finitely generated, residually finite and torsion free
Lemma \ref{lem2} then implies that $\pi_{K}$ is cofinitely Hopfian.
\end{proof}

We recover the following result of \cite{Si}.

\begin{corollary} 
Let $K$ be a knot in $\S^3$.
Then $\pi_{K}$ is hyper-Hopfian if and only if 
$K$ is not a torus knot.
\end{corollary}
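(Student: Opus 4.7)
The plan is to derive the corollary from Theorems~\ref{thm5} and~\ref{thm6}, noting that hyper-Hopficity (cyclic quotient, finite \emph{or} infinite) is strictly stronger than finite hyper-Hopficity, so Theorem~\ref{thm6} does not suffice on its own. The new ingredient will be to rule out endomorphisms whose image has infinite cyclic quotient.

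For ``torus knot $\Rightarrow$ not hyper-Hopfian'', since hyper-Hopfian implies finitely hyper-Hopfian it suffices to falsify the latter. A non-trivial torus knot is fibred with infinite cyclic centre, so $\pi_K\cong F_r\rtimes_\theta\IZ$ with $r\geq 2$ and $[\theta]$ of finite order in $\mathrm{Out}(F_r)$; Theorem~\ref{thm5} then tells us that $\pi_K$ is not finitely hyper-Hopfian, a fortiori not hyper-Hopfian. The unknot, counted as a torus knot, has group $\IZ$, which is plainly not hyper-Hopfian.

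For the converse, let $\varphi:\pi_K\to\pi_K$ have image $N$ normal with cyclic quotient $\pi_K/N$. Since $\pi_K^{\mathrm{ab}}\cong\IZ$ and $\pi_K/N$ is abelian, $\pi_K'\subseteq N$ and $\pi_K/N$ is a cyclic quotient of $\IZ$. If it is finite, then $N$ has finite index in $\pi_K$ and Theorem~\ref{thm6} gives that $\varphi$ is an automorphism. Otherwise $\pi_K/N\cong\IZ$, which forces $N=\pi_K'$ and hence a surjection $\pi_K\twoheadrightarrow\pi_K'$. I would rule this out by the classical fibred/non-fibred dichotomy for knots: if $K$ is non-fibred, $\pi_K'$ is not finitely generated, contradicting that it is a quotient of the finitely generated group $\pi_K$; if $K$ is fibred (and non-torus) then $\pi_K'\cong F_{2g}$ with $g\geq 1$, and abelianising $\pi_K\twoheadrightarrow F_{2g}$ would yield a surjection $\IZ\twoheadrightarrow\IZ^{2g}$, impossible.

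The main obstacle is precisely this infinite-cyclic-quotient case, where Theorem~\ref{thm6} provides no direct information; the classical fact that $\pi_K'$ is not finitely generated in the non-fibred case and is free of rank $2g$ in the fibred case, together with the fact that knot groups abelianise to $\IZ$, is what closes the gap left by cofinite Hopficity.
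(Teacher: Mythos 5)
Your proposal is correct and follows essentially the same route as the paper: the torus-knot direction via Theorem~\ref{thm5} (non-trivial centre of the free-by-cyclic group), and the converse by reducing the finite-quotient case to Theorem~\ref{thm6} and killing the $\pi_K/\varphi(\pi_K)\cong\IZ$ case using Stallings' fibration theorem together with the fact that $\pi_K'$ would then be a finitely generated (hence free) group that is also a quotient of $\pi_K$ and so has cyclic abelianization. The paper phrases this last step slightly more compactly, but the content is identical.
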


\begin{proof} 
Let $\pi=\pi_{K}$ and assume that $K$ is not a torus knot.
Then $K$ is non-trivial and $\pi$ is non-abelian.
Let $\varphi:\pi\to\pi$ be an endomorphism with $\varphi(\pi)$ 
a normal subgroup of $\pi$ and $\pi/\varphi(\pi)$ cyclic.
Then $\pi'\leq\varphi(\pi)$, since knot groups have abelianization $\IZ$.
If $\varphi(\pi)=\pi'$,  then $\pi'$ is finitely generated.
But then $K$ is fibred and so $\pi'$ is free \cite{St}.
Since $\pi$ and $\varphi(\pi)$ have cyclic abelianization
this is only possible if $\pi\cong{\IZ}$, 
which is contrary to our assumption.
Therefore $\pi/\varphi(\pi)$ is finite, 
and so $\varphi$ is an automorphism by Theorem \ref{thm6}.
\end{proof}

The complement of any knot in $\S^3$ is aspherical \cite{papa} 
and hence is determined up to homotopy equivalence by its fundamental group. 
But, famously, the fundamental group does not determine the knot 
exterior up to homeomorphism.
For example, the granny knot 
(the sum of two copies of the left hand trefoil knot) 
and the reef knot (the sum of a trefoil knot and its reflection) 
have the same group, but their exteriors are not homeomorphic.
On the other hand,  Waldhausen \cite{Wald} proved that any
homotopy equivalence of knot complements that preserves the {\em{peripheral structure}} 
is homotopic to a homeomorphism.  
More precisely, he shows that if $X$ is the complement 
of an open regular neighbourhood of a knot $K$, 
then any homotopy equivalence of the pair $(X,\partial X)$ 
is homotopic to a homeomorphism 
(and it is easy to extend this to the whole of $\S^3\smallsetminus K$).

\begin{theorem}\label{corabove}
Let $K$ be a knot that is not a torus knot and let $f$
be a proper open self-map of the knot complement
${\IS^3\smallsetminus K}$. Then $f$ is homotopic to a homeomorphism.
\end{theorem}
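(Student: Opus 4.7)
The plan is to reduce the theorem to Waldhausen's classical rigidity theorem for knot exteriors. Let $X\subset\IS^3$ denote the compact exterior of $K$, obtained by excising the interior of a regular neighbourhood of $K$; then $X$ is a deformation retract of $\IS^3\smallsetminus K$ and has a single toral boundary component $\partial X$.

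First, since $\IS^3\smallsetminus K$ is connected and $f$ is proper and open, the image of $f$ is both open and closed, so $f$ is surjective. Proposition \ref{Walsh} therefore applies and tells us that $f_*\pi_K$ has finite index in $\pi_K$. Because $K$ is not a torus knot, Theorem \ref{thm6} guarantees that $\pi_K$ is cofinitely Hopfian, so $f_*$ is an automorphism. Since the knot complement is aspherical \cite{papa}, this implies that $f$ itself is a homotopy equivalence of $\IS^3\smallsetminus K$.

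To invoke Waldhausen's theorem, I would next promote this to a homotopy equivalence of the pair $(X,\partial X)$. Because $\IS^3\smallsetminus K$ has exactly one end (a toral one) and $f$ is proper, $f$ sends this end to itself. On fundamental groups this forces $f_*$ to carry the peripheral subgroup $P=\pi_1(\partial X)\cong\IZ^2$ into some conjugate of $P$, and since $f_*$ is already known to be an automorphism it maps $P$ isomorphically onto such a conjugate. One may then homotope $f$, within its proper homotopy class, to a map sending a collar of $\partial X$ into a collar, and hence restricting to a self-map of $(X,\partial X)$ whose restriction to $\partial X$ induces an isomorphism on $\pi_1$. Since $\partial X$ is a $K(\IZ^2,1)$, this restriction is itself a homotopy equivalence, so $f|_X$ is a homotopy equivalence of the pair.

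Waldhausen's theorem \cite{Wald} then delivers a homotopy from $f|_X$ to a homeomorphism of $(X,\partial X)$; extending across the collar of the end (the easy extension noted in the paper) yields the desired homotopy from $f$ to a self-homeomorphism of $\IS^3\smallsetminus K$. The main obstacle is the peripheral step: once Theorem \ref{thm6} supplies the $\pi_1$-automorphism, the real content is verifying that the properness of $f$ is faithfully transmitted to the peripheral data required by Waldhausen's theorem — and this rests on the fact that a proper self-map of a one-ended space must preserve its end, and hence the peripheral subgroup up to conjugacy.
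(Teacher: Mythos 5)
Your overall strategy coincides with the paper's: surjectivity from properness and openness, Proposition \ref{Walsh} to get finite index, Theorem \ref{thm6} to upgrade $f_*$ to an automorphism, asphericity to conclude that $f$ is a homotopy equivalence, and then Waldhausen's theorem once the peripheral structure is seen to be preserved. Your reduction of the peripheral step to end-preservation (homotoping $f$ so that $f(\partial X)\subset\partial X$, hence $f_*(P)\le gPg^{-1}$ for some $g$) also matches the paper's argument.

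The gap is in the sentence ``since $f_*$ is already known to be an automorphism it maps $P$ isomorphically onto such a conjugate.'' What you actually have at that point is an injection $f_*|_P:P\to gPg^{-1}$ between copies of $\Z^2$, and $\Z^2$ is not co-Hopfian: such an injection can land in a proper subgroup of finite index (e.g.\ $2\Z\times\Z$ inside $\Z^2$). Nothing about $f_*$ being an automorphism of the ambient group $\pi_{K}$ forces $f_*(P)$ to be all of $gPg^{-1}$; if it were a proper finite-index subgroup, then $f|_{\partial X}$ would be homotopic to a covering of the torus of degree greater than $1$, not a homotopy equivalence, and Waldhausen's theorem for the pair $(X,\partial X)$ would not apply. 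This surjectivity onto the peripheral subgroup is precisely the point the paper isolates (``We will be done if we can prove that $f|_{\partial X}$ induces an isomorphism $P\to P$'') and settles by a separate group-theoretic argument: if $f_*^{-1}(P)\cong\Z^2$ strictly contained $P$, there would be elements of $\pi_{K}\smallsetminus P$ centralising $P$, so the double $\pi_{K}*_P\pi_{K}$ --- the fundamental group of the closed aspherical $3$-manifold obtained by doubling $X$ along $\partial X$ --- would contain $\Z^3$ and hence be virtually abelian, contradicting the fact that non-trivial knot groups contain non-abelian free subgroups. You need this step (or an equivalent statement to the effect that the peripheral subgroup is maximal abelian in $\pi_{K}$) to close the argument.
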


\begin{proof}
Since $f$ is proper and open and $\IS^3\smallsetminus K$ is connected, 
$f$ is surjective.
Therefore $n=[\pi_{K}:f_*(\pi_{K})]$ is finite, by Proposition \ref{Walsh}.
Since $K$ is not a torus knot, 
Theorem \ref{thm6} tells us that $f_*$ is an isomorphism. 
And since knot complements are aspherical, 
it follows that $f$ is a homotopy equivalence. 
By the work of Waldhausen quoted above, we will be done
provided that we can argue that $f$ preserves the peripheral structure of the knot.
For the benefit of readers who are not specialists in 3-manifold theory, we 
explain why this is true  with a proof that has a group-theoretic flavour.

Let $N\cong{\S^1\times{\D^2}}$ be a closed regular neighbourhood of $K$,
where $\D$ is the open unit disc in $\R^2$. Let $r\D$ be the
disc of radius $r\in (0,1]$, let $rN$ be the corresponding neighourhood of $K$
and let $U_r=rN\smallsetminus{K}$.
The closure $X$ of $\S^3\smallsetminus{N}$ is a compact manifold 
with boundary $\partial{X}$
a torus, and $X\hookrightarrow\S^3\smallsetminus{K}$ is a homotopy equivalence.
Adjusting $f$ by a homotopy if necessary, 
we may assume that $f(X)\subseteq{X}$. 
Moreover, since $f$ is proper, 
we may assume that it maps each $U_r$ inside a suitable
$U_{r'}$ (since the sets $U_r$ form a cofinal system of closed neighbourhoods 
of the single end of the noncompact $3$-manifold $\S^3\smallsetminus{K}$).
Therefore, adjusting by a radial projection, 
we can arrange that $f(\partial X)\subset \partial X$.
We fix a basepoint $x_0\in \partial X$ and homotope $f$ so that $f(x_0)=x_0$.

Since $K$ is non-trivial,
$\pi_1(\partial X, x_0)\to\pi_{K}=\pi_1(\S^3\smallsetminus K, x_0)$ 
is injective (cf.~\cite{papa}); let $P$ denote its image.
We will be done if we
can prove that $f|_{\partial X}$ induces an isomorphism $P\to P$.
 
Since $f_*$ is injective,  $f_*^{-1}(P)\cong{\Z^2}$, 
so if $f_*(P)$ were not the whole of $P$ then there would be elements 
of $\pi_K\smallsetminus P$ that commuted with $P$. 
But then $\G=\pi_{K}*_P\pi_{K}$ would contain a copy of $\Z^3$, 
and $\G$ is the fundamental group of the closed, 
aspherical $3$-manifold $Y$ obtained by doubling $X$ along its boundary. 
Since infinite coverings of $3$-manifolds collapse to 2-complexes, $\Z^3$
cannot be the fundamental group of an infinite-sheeted covering of $Y$, so
$\pi_1(Y)$ would be virtually abelian.
But this is absurd, since $\pi_1(Y)$ contains $\pi_K$ 
and non-trivial knot groups have non-abelian free subgroups.
This contradiction completes the proof.
\end{proof}
       
\begin{corollary}
Let $K$ be a knot and let $f:\IS^3\to\IS^3$ 
be an open mapping of finite degree not equal to 1 such that $f^{-1}(K)=K$. 
Then $K$ is a torus knot.
\end{corollary}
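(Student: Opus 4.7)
The plan is to argue by contradiction: assume $K$ is not a torus knot, so by Theorem \ref{thm6} its group is cofinitely Hopfian, and aim to deduce that $\deg f = 1$, contradicting the hypothesis. The first task is to verify that $f$ restricts to a proper open self-map of the knot complement $\S^3\smallsetminus K$. Because $f^{-1}(K)=K$, the complement is $f$-saturated, so $f(\S^3\smallsetminus K)\subseteq \S^3\smallsetminus K$ and the restriction inherits openness from $f$ (images of opens are open in $\S^3$ and already sit in $\S^3\smallsetminus K$). Properness is immediate: any compact $C\subset\S^3\smallsetminus K$ has $f$-preimage closed in the compact space $\S^3$ and, by saturation, disjoint from $K$, hence compact.

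Since $K$ is not a torus knot, Theorem \ref{corabove} now applies and tells us that $f|_{\S^3\smallsetminus K}$ is homotopic to a homeomorphism of $\S^3\smallsetminus K$. In particular, its topological degree as a proper self-map of the oriented noncompact $3$-manifold $\S^3\smallsetminus K$ equals $\pm 1$.

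The final step is to identify this with $\deg f$. A proper open mapping of finite degree $\S^3\to\S^3$ is a branched covering with positive degree, and its regular values form a dense open subset of $\S^3$; I would choose such a regular value $p$ inside $\S^3\smallsetminus K$. The saturation property forces every preimage of $p$ to lie in $\S^3\smallsetminus K$, and the local degree contributions at those preimages are the same whether computed in $\S^3$ or in $\S^3\smallsetminus K$. Hence $\deg f = \deg(f|_{\S^3\smallsetminus K}) = 1$, contradicting the hypothesis that $\deg f \neq 1$. I expect the only step requiring real care to be this degree identification (picking a regular value in the open subset and tracking orientations); the structural content is entirely in Theorem \ref{corabove}.
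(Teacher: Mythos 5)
Your proposal is correct and follows essentially the same route as the paper: restrict $f$ to $\S^3\smallsetminus K$, check that the restriction is proper and open using $f^{-1}(K)=K$, invoke Theorem \ref{corabove}, and contradict the degree hypothesis. The paper phrases this contrapositively and leaves implicit the identification $\deg f=\deg\bigl(f|_{\S^3\smallsetminus K}\bigr)$, which you spell out via a regular value in the saturated complement.
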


\begin{proof}
The map $f$ is surjective, since it is open and $\IS^3$ is compact.
Therefore $f(K)=K$ and the restriction  
$f:\IS^3\smallsetminus K \to \IS^3\smallsetminus K$ is proper, 
open and surjective.  
Since the degree of $f$ is not $1$,  
$f$ cannot be homotopic to a homeomorphism.  
Thus Theorem \ref{corabove} implies $K$ is a torus knot.
\end{proof}
 
A map is said to be {\em{discrete}}
if the preimage of each point in the target is a discrete set.
 
\begin{corollary}
Let $K$ be a non-torus knot and let $f:\IS^3\to\IS^3$ be a discrete open mapping such that $f^{-1}(K)=K$. Then $f$ is a homeomorphism.
\end{corollary}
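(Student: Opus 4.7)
The plan is to reduce directly to the preceding corollary. First I would verify that $f$ has a well-defined finite degree: since $\IS^3$ is compact and $f$ is discrete, each fibre $f^{-1}(y)$ is finite, and by classical results of Chernavskii and V\"ais\"al\"a a discrete open map between connected oriented $n$-manifolds is a branched covering in the following sense. At each point $x$ there is a positive integer local index $i(x,f)$, and the sum $\sum_{x\in f^{-1}(y)} i(x,f)$ is independent of $y$; this common value is the degree $\deg f \in \IN$.

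With the degree in hand, the preceding corollary applies to $f$. If $\deg f \neq 1$ then $K$ would have to be a torus knot, contradicting our hypothesis. Hence $\deg f = 1$, which forces every fibre to be a single point at which the local index equals $1$. By V\"ais\"al\"a's characterisation, local index $1$ at $x$ is equivalent to $f$ being a local homeomorphism at $x$, so $f$ is a local homeomorphism on all of $\IS^3$. A continuous open bijection between compact Hausdorff spaces is a homeomorphism, so $f$ itself is a homeomorphism.

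The only non-trivial input is the existence of the local index and the resulting degree for arbitrary discrete open maps (rather than smooth or PL ones); once that theory is invoked, the reduction to the preceding corollary is immediate, and the $\deg f = 1$ case is formal. I expect no further obstacles, since the rigidity needed to force $f_\ast$ to be an isomorphism has already been absorbed into the preceding corollary (and ultimately into Theorem \ref{thm6}).
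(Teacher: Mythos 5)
Your proof is correct and follows essentially the same route as the paper: both reduce to the preceding corollary to rule out degree $\neq 1$ and then invoke the Chernavskii--V\"ais\"al\"a theory of discrete open maps to upgrade to a homeomorphism (the paper packages this last step as the single cited fact that a discrete open map homotopic to a homeomorphism is a homeomorphism, which for self-maps of $\S^3$ is equivalent to your degree-one/local-index argument). Your version simply unpacks the local-index machinery explicitly, which is a reasonable expansion of the paper's one-line proof.
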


\begin{proof}
A discrete open mapping homotopic to a homeomorphism is a homeomorphism 
-- see \cite{Cern,Vai}.
\end{proof}

Notice, with regard to the context of  \cite{BHM}, that
the above corollary implies that the branch set of a quasiregular map 
cannot be a  completely invariant non-torus knot.  
The assumption here that the mapping is discrete 
(as opposed to {\em light}) is necessary due to the counterexamples 
constructed in \cite{Wilson}.

\subsection{Torus knots are not cofinitely Hopfian}\label{sst}

The fundamental group of a torus knot has the form
$$\G_{m,n}=\langle a, b\mid a^n=b^m \rangle .$$
The centre of such a group is infinite cyclic, generated
by $z:=a^n=b^m$. Let $Q\cong (\Z/n)\ast(\Z/m)$
be the quotient by the centre.

We consider maps  $\varphi:\G_{m,n}
\to \G_{m,n}$ of the form
\[ \varphi(a)=az^p\;\;\;\; {\rm and} \;\;\;\; \varphi(b)=bz^q.\] 
This formula defines a homomorphism if and only if
 $\varphi(a^n) = \varphi(b^m)$, that is, $np=mq$ (which we write as $r-1$). 
 The image of $\varphi$  
is of finite index because it maps onto $Q$ and
intersects $Z=\langle z\rangle$ non-trivially. 
 The normal form theorem for amalgamated free
products shows that $\varphi$ is injective. Since no
commutators lie in the kernel,
$\varphi^{-1}(Z) = Z$. And
since  $\varphi(z) = \varphi(a^n)=z^{r}$, we see that
$\varphi$ is not onto unless $p=q=0$. Thus we have proved:

\begin{proposition} If $\G_{m,n}=\langle a, b\mid a^n=b^m \rangle $
and $r$ is an integer, with $r\equiv 1\mod n$ and $r\equiv 1\mod m$,
then $[\varphi(a):=a^{r}$,  $\varphi(b):=b^{r}]$ defines a monomorphism
$\varphi:\G_{m,n}\to \G_{m,n}$ whose image has finite index. But
$\varphi$ is onto only if $r=\pm 1$.
\end{proposition}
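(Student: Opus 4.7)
The plan is to organize the proof around the central extension
$$1 \longrightarrow Z \longrightarrow \G_{m,n} \longrightarrow Q \longrightarrow 1,$$
where $Z = \langle z\rangle$ with $z = a^n = b^m$, and $Q \cong (\Z/n) * (\Z/m)$ is the quotient by the centre. All three claims turn into easy consequences of how $\varphi$ interacts with $Z$ and $Q$.

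First I would verify that $\varphi$ is a well-defined homomorphism: the congruence hypotheses give $r-1 = np = mq$ for some integers $p, q$, and then $\varphi(a)^n = a^{rn} = z^r = b^{rm} = \varphi(b)^m$, so the defining relation is respected. Equivalently, $\varphi(a) = az^p$ and $\varphi(b) = bz^q$, matching the formulas in the paragraph preceding the proposition.

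Next I would observe that $\varphi(z) = \varphi(a)^n = z^r$, so $\varphi(Z) = \langle z^r\rangle \subseteq Z$ and $\varphi$ descends to a map $\bar\varphi : Q \to Q$. Because $r \equiv 1$ modulo both $n$ and $m$, the induced map sends $\bar a \mapsto \bar a$ and $\bar b \mapsto \bar b$, so $\bar\varphi$ is the identity on $Q$. From this, each remaining claim drops out. The image of $\varphi$ surjects onto $Q$ and contains $z^r$, so $[\G_{m,n} : \varphi(\G_{m,n})] = |r|$, which is finite. For injectivity, if $\varphi(g) = 1$ then $\bar g = \bar\varphi(\bar g) = 1$ in $Q$, forcing $g \in Z$; then $\varphi|_Z : z \mapsto z^r$ with $r \neq 0$ (since $r \equiv 1 \pmod n$ and $n \geq 2$) is injective, so $g = 1$. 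And because $\bar\varphi$ is already the identity on $Q$, surjectivity of $\varphi$ reduces to $\varphi(Z) = Z$, i.e.\ $\langle z^r\rangle = \langle z\rangle$, i.e.\ $r = \pm 1$.

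The main obstacle, such as it is, is injectivity. The paper sketches the normal form theorem for amalgamated free products; the central-extension route above is essentially a five-lemma diagram chase and avoids that machinery, while bundling all three claims together.
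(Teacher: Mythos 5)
Your proof is correct, and for most of the statement it follows the same strategy as the paper: both verify the relation via $r-1=np=mq$ (equivalently $\varphi(a)=az^p$, $\varphi(b)=bz^q$), both obtain finite index from the fact that the image surjects onto $Q$ and meets $Z=\langle z\rangle$ non-trivially, and both rule out surjectivity by computing $\varphi(z)=z^r$ and noting $\varphi^{-1}(Z)=Z$. The genuine divergence is the injectivity step. The paper invokes the normal form theorem for the amalgamated free product $\langle a\rangle *_{\langle a^n=b^m\rangle}\langle b\rangle$; you instead observe that, since $r\equiv 1$ modulo both $n$ and $m$, the induced endomorphism $\bar\varphi$ of $Q\cong(\Z/n)*(\Z/m)$ is the identity, so any kernel element is forced into $Z$, where $z\mapsto z^r$ with $r\neq 0$ is injective. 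Granting the identifications the paper itself takes as known (that $Z$ is the infinite cyclic centre and $Q$ is the displayed free product), your route is more elementary and self-contained, and it makes $\varphi^{-1}(Z)=Z$ and the exact index $[\G_{m,n}:\varphi(\G_{m,n})]=|r|$ transparent rather than relying on a normal-form computation; the only caveat is that those background identifications are themselves usually established via normal forms, so the dependence is relocated rather than eliminated. No gaps.
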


\bigskip

\noindent M. R.  Bridson - University of Oxford, UK.

bridson@maths.ox.ac.uk

\noindent D. Groves - University of Illinois at Chicago,  USA.

groves@math.uic.edu

 \noindent J. Hillman - The University of Sydney,  Sydney, Australia.
 
jonathan.hillman@sydney.edu.au

\noindent G.J. Martin -  Massey University,  Auckland, NZ.

g.j.martin@massey.ac.nz


\begin{thebibliography}{99} 

\bibitem{ali} E. Alibegovi\'c, {\em 
Translation lengths in ${\rm Out}(F_n)$},
Geom. Dedicata  {\bf 92}  (2002), 87--93.

\bibitem{ali2} E. Alibegovi\'c 
{\em A combination theorem for relatively hyperbolic groups}, 
Bull. London Math. Soc. {\bf 37} (2005), 459--466.

\bibitem{BS} G. Baumslag and D. Solitar, 
{\em Some two-generator one-relator non-Hopfian groups},  
Bull. Amer. Math. Soc. {\bf  68}  (1962),  199--201. 

\bibitem{Bi} R. Bieri, {\em Homological Dimensions of Discrete Groups}, 
Queen Mary College Mathematical Notes, London (1976).

\bibitem{BorelSerre} A. Borel and J.-P. Serre, 
{\em Corners and arithmetic groups},
with an appendix {\em Arrondissement des vari\'et\'es \`a coins}, 
by A. Douady and L. H\'erault,
Comment. Math. Helv. 48 (1973), 436--491.
 
\bibitem{mb-icm} M. R. Bridson, {\em Non-positive curvature and complexity 
for finitely presented groups}, 
in {\em Proc. ICM, (Madrid, 2006)}, Vol II, 961--987.
 
\bibitem{BHM} M. R. Bridson, A. Hinkkanen and G. Martin, 
{\em Quasiregular self mappings of manifolds and word hyperbolic groups},  
 Compositio Math. {\bf 143} (2007), 1613--1622.

\bibitem{Cern} A. V. Cernavski\u\i,  
{\em Finite-to-one open mappings of manifolds}, (Russian) 
Mat. Sb. (N.S.), {\bf 65} (107) (1964), 357--369.

\bibitem{Dah} F. Dahmani, {\em Combination of convergence groups}, Geom. and Topol. {\bf 7} (2003), 933--963.

\bibitem{DG} F. Dahmani and D. Groves, 
{\em The isomorphism problem for toral relatively hyperbolic groups},  
Publ. Math. Inst. Hautes Etudes Sci. {\bf 107} (2008), 211--290.

\bibitem{Da} R. J. Daverman, {\em Hyper-Hopfian groups and approximate fibrations}, 
Compositio Math. {\bf 86} (1993), 159--176.

\bibitem{DP} T. Delzant and L. Potyagailo, {\em Endomorphisms of Kleinian Groups},
GAFA {\bf 13} (2003), 396--436.

\bibitem{FJ}
F. T. Farrell and L. E. Jones, 
{\em Topological rigidity for compact non-positively curved manifolds},
in {\em Differential geometry: Riemannian geometry (Los Angeles, CA, 1990)}, 
Proc. Sympos. Pure Math., 54, Part 3, Amer. Math. Soc., Providence, RI
(1993), 229--274.

\bibitem{FH}  M. Feighn and M. Handel, 
{\em Mapping tori of free group automorphisms are coherent},
Ann. Math. {\bf 149} (1999), 1061--1077.
 
\bibitem{GM} F. W. Gehring  and G. J. Martin, {\em Discrete quasiconformal groups, I}, 
 Proc. London Math. Soc., (3), {\bf  55} (1987),  331--358.

\bibitem{GMSW} R. Geoghegan, M. L. Mihalik, M. Sapir and D. T. Wise, 
{\em Ascending HNN extensions of finitely generated free groups are Hopfian},
Bull. London Math. Soc. {\bf 33} (2001), 292--298.

\bibitem{GW} F. Gonz\'alez-Acu\~na and W. C. Whitten,  
{\em Imbeddings of Three-Manifold Groups},  
Memoirs Amer. Math. Soc. {\bf 474}  (1992).

\bibitem{gromov} M. Gromov,
{\em  Volume and bounded cohomology},
 Inst. Hautes \'Etudes Sci. Publ. Math., {\bf 56}
(1982), 5--99.

\bibitem{MR-RH} D. Groves, {\em Limit groups for relatively hyperbolic groups, II:
Makanin-Razborov diagrams}, {Geom. and Topol.} {\bf 9} (2005), 2319--2358.

\bibitem{vincent} V. Guirardel, {\em Limit groups and groups acting freely on $\R^n$-trees},
Geom. and Topol. {\bf 8} (2004), 1427--1470.

\bibitem{He} J. Hempel, {\em Residual finiteness for 3-manifolds},
in {\it Combinatorial Group Theory and Topology} (edited by S.M.Gersten and
J.R.Stallings),
Ann. Math. Study 111 (1987), 379--396.

\bibitem{Hi} R. Hirshon, {\em Some properties of endomorphisms in residually finite groups},
J. Austral. Math. Soc. {\bf 24} (1977), 117--120.

\bibitem{Ho} H. Hopf, {\em Beitr\"age zur Klassifizierung der Fl\"achen Abbildungen},
J. Reine Angew. Math. {\bf  165} (1931), 225--236.

\bibitem{HJ} J.J.A.M. Humphreys and F.E.A. Johnson, {\em Multiplicative invariants and the finite co-Hopfian property}, Mathematika {\bf 55} (2009), 115--127.

\bibitem{LS} J.-F. Lafont and B. Schmidt,
{\em Simplicial volume of closed locally symmetric spaces of non-compact type},
Acta Math. {\bf 197} (2006), 129--143. 

\bibitem{LoS} C. L\"oh and R. Sauer,
{\em Degree theorems and Lipschitz simplicial volume for 
non-positively curved manifolds of finite volume},
J. Topol. {\bf 2} (2009), 193--225.

\bibitem{Malcev} A. I. Mal`cev, 
{\em On the faithful representation of infinite groups by matrices},
AMS Trans. (2) {\bf 45} (1965), 1--18 [Russian original: Mat. SS.(N.S.) {\bf 8(50}
(1940), 405--422.]

\bibitem{Margulis} G. Margulis, {\em Discrete subgroups of semisimple Lie groups}, 
Springer-Verlag, Berlin, 1991.

\bibitem{MMP} G. J. Martin, V. Mayer, K. Peltonen, {\em The generalized Lichnerowicz 
problem: uniformly quasiregular mappings and space forms}, 
Proc. Amer. Math. Soc. {\bf 134} (2006), no. 7, 2091--2097.

\bibitem{Mostow} G. D. Mostow, {\em Quasi-conformal mappings in $n$-space 
and the rigidity of hyperbolic space forms,} 
Publ. Math. Inst. Hautes tudes Sci. {\bf 34} (1968), 53--104. 

\bibitem{papa}
C. D. Papakyriakopoulos, {\em On Dehn's Lemma and the Asphericity of Knots},
 Ann. Math. {\bf 66} (1957), 1–-26.

\bibitem{Prasad} G. Prasad, {\em Strong rigidity of $\IQ$ rank-one lattices}, 
Invent. Math. {\bf 21} (1973), 255--286.

\bibitem{Re} A. Reznikov,  {\em Volumes of discrete groups and topological
complexity of homology spheres},
Math. Ann. {\bf  306}  (1996), 547--554.
 
\bibitem{Ro} D. Rolfsen, {\em Knots and Links},

Publish or Perish. Inc., Berkeley, Cal. (1976).

\bibitem{Sela1} Z. Sela, {\em Endomorphisms of hyperbolic groups: {\bf I}}, 
Topology {\bf 38} (1999), 301--321.

\bibitem{Sela2} Z. Sela, {\em Structure and rigidity in (Gromov) 
hyperbolic groups and discrete groups in rank $1$ Lie groups. II.}, 
Geom. Funct. Anal. {\bf 7} (1997), 561--593.

\bibitem{Se-limit} Z. Sela, {\em Diophantine geometry over groups, 
I: Makanin-Razborov diagrams}, 
Publ. Math. IHES {\bf 93} (2001),  31--105.

\bibitem{Sel} A. Selberg, 
{\em On discontinuous groups in higher-dimensional symmetric spaces},
in {\em Contributions to function theory} 
(Internat. Colloq. Function Theory, Bombay, 1960),
Tata Institute of Fundamental Research, Bombay (1960), 147--164.

\bibitem{Si} D. S. Silver,  {\em Nontorus knot groups are hyper-Hopfian},
Bull. London Math. Soc. {\bf 28} (1996), 4--6.

\bibitem{SS} S. Smale,  {\em A note on open maps},   
Proc. Amer. Math. Soc. {\bf 8} (1957), 391--393.

\bibitem{St} J. Stallings, {\em On fibering certain $3$-manifolds},
in {\em Topology of 3-manifolds and related topics 
(Proc. The Univ. of Georgia Institute, 1961)},
Prentice-Hall, Englewood Cliffs, N.J., 95--100 (1962).

\bibitem{Th} W. P. Thurston, {\em Three-dimensional manifolds, Kleinian groups and
hyperbolic geometry},
Bull. Amer. Math. Soc. {\bf 6} (1982), 357--381.
  
\bibitem{Vai} J. V\"ais\"al\"a, {\em Discrete open mappings on manifolds}, 
Ann. Acad. Sci. Fenn. Ser. A I, {\bf 392} (1966) 10 pp.
 
\bibitem{Walsh} J. J. Walsh, {\em Light open and open mappings on manifolds, 
{\bf II}},  Trans. Amer. Math. Soc. {\bf 217} (1976), 271--284.

\bibitem{Wald} F. Waldhausen, {\em On irreducible $3$-manifolds which are sufficiently large},
Ann. Math. {\bf 87} (1968), 56--88.
 
\bibitem{WW} Wang, S. and Wu, Y. {\em Covering invariants and cohopficity of
$3$-manifold groups}, Proc. London Math. Soc. {\bf 68} (1994), 203--224.

\bibitem{WY} Wang, S. and Yu, F. {\em Covering degrees are determined by graph
manifolds involved},
Comment. Math. Helv. 74 (1990), 238--247.

\bibitem{Wilson} D. Wilson, {\em Open mappings on manifolds and 
a counterexample to the Whyburn conjecture},  
Duke Math. J. {\bf 40} (1973), 705--716.

\bibitem{Witte-Morris} D. Witte Morris, 
{\em An introduction to arithmetic groups}, 
preliminary version,
available at \texttt{http://people.uleth.ca/$\sim$dave.morris/}

\end{thebibliography}
\end{document}